\providecommand{\abs}[1]{\left\lvert#1\right\rvert}
\providecommand{\Spec}[1]{\text{Spec }#1}
\DeclareMathOperator{\Hom}{Hom}
\DeclareMathOperator{\PGL}{PGL}
\DeclareMathOperator{\GL}{GL}
\def\N{\mathbb{N}}
\def\Z{\mathbb{Z}}
\def\Q{\mathbb{Q}}
\def\P{\mathbb{P}}
\def\A{\mathbb{A}}
\def\F{\mathbb{F}}
\def\O{\mathcal{O}}
\def\m{\mathfrak{m}}
\newcommand{\tth}{^{\operatorname{th}}}
\newcommand{\smooth}{\operatorname{smooth}}
\theoremstyle{plain}
\newtheorem{thm}{Theorem}
\newtheorem*{thm*}{Theorem}
\newtheorem{lem}[thm]{Lemma}
\newtheorem{prop}[thm]{Proposition}
\newtheorem{cor}[thm]{Corollary}
\theoremstyle{definition}
\newtheorem{defn}[thm]{Definition}
\theoremstyle{remark}
\newtheorem*{rem}{Remark}
\begin{document}
    \title{Good Reduction of Periodic Points on Projective Varieties}
    \author[Hutz]{Benjamin Hutz}
    \address{Department of Mathematics and Computer Science,
            Amherst College,
            Amherst, MA}
    \email{bhutz@amherst.edu}

    \thanks{The author thanks his advisor Joseph Silverman for his many insightful suggestions and ideas, and also Dan Abramovich and Robert Benedetto for their suggestions.}

\keywords{periodic points, good reduction, dynamical systems}

\subjclass[2000]{
11G99,
14G99
(primary);
37F99
(secondary)}


\maketitle

    \begin{abstract}
      We consider the dynamical system created by iterating a morphism of a projective variety defined over the field of fractions of a discrete valuation ring.  We study the primitive period of a periodic point in this field in relation to the primitive period of the reduced point in the residue field, the order of the action on the cotangent space, and the characteristic of the residue field.
    \end{abstract}

\section{Introduction}
    We consider dynamical systems arising from iterating a morphism of a projective variety defined over the field of fractions of a discrete valuation ring.  Our goal is to obtain information about the dynamical system over the field of fractions by studying the dynamical system over the residue field.  In particular, we aim to bound the possible primitive periods of a periodic point.  This topic is discussed for $\phi:\P^1 \to \P^1$ with many references in \cite[Section 2.6]{Silverman10}.

    Recall that given a set $A$ and map $f:A \to A$ we can create a dynamical system by iterating the map $f$ on the set $A$.  We denote $f^n$ as the $n\tth$ iterate of $f$.  An element $a\in A$ such that $f^n(a) = a$ for some positive integer $n$ is called a \emph{periodic point} and the least such $n$ is called the \emph{primitive period of $a$}.  We will use the following notation unless otherwise specified:
    \begin{itemize}
        \item $R$ is a discrete valuation ring complete with respect to a normalized valuation $v$.
        \item $\m$ is the maximal ideal of $R$ with uniformizer $\pi$.
        \item $K$ is the field of fractions of $R$.
        \item $k = R/\m$ is the finite residue field of characteristic $p$.
        \item $\overline{\phantom{P}}$ denotes reduction mod $\pi$.
        \item $\abs{\phantom{P}}_v$ is the associated non-archimedean absolute value.
    \end{itemize}
    Note that these conditions imply that $K$ is a local field.

    In Section \ref{sect2} we establish a notion of good reduction for a projective variety $X/K$ and a morphism $\phi:X \to X$ defined over $K$ so that we can study the dynamics of $\phi$ over $K$ by examining the dynamics of the reduced map over $k$.

    In Section \ref{sect3} we describe the primitive period of $P \in X(K)$ with the following two theorems.
    \begin{thm} \label{thm_intro_mrp}
        Let $\mathcal{X}/R \subseteq \P^{N}_R$ be a smooth projective model of
        $X/K$, a non-singular irreducible projective variety of dimension $d$.
        Let $\phi_R:\mathcal{X}/R \to \mathcal{X}/R$ be an $R$-morphism and
        $P_R \in \mathcal{X}(R)$ be a periodic point of primitive period $n$ for $\phi_R$ with $\overline{P}$ of primitive period $m$.
        Then
        \begin{equation*}
            n=m
        \end{equation*}
        or there exists a $\phi$-stable subspace $V$ of the cotangent space of $\mathcal{X} \times_R \Spec{k}$ such that
        \begin{equation*}
            n=mr_Vp^e \quad \text{with } e \geq 0,
        \end{equation*}
        where $r_V$ is the order of the map induced by $\overline{\phi^m}$ on $V$.
        Furthermore, let $\mathcal{V} \subset \mathcal{X}$ be the scheme theoretic union of $\{P_R,\phi_R(P_R),\ldots,\phi^{n-1}_R(P_R)\}$.  Then $V$ is the cotangent space of $\mathcal{V} \times_R \Spec{k}$.  Let $d^{\prime} \leq d$ be the dimension of $V$, then
        \begin{equation*}
            r_V \leq (N\pi)^{d^{\prime}} -1
        \end{equation*}
        where $N\pi$ is the norm of $\pi$.
    \end{thm}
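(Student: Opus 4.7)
The plan is to replace $\phi$ by $\psi = \phi^m$ so that $\overline{\psi}$ fixes $\overline{P}$, and then to show that the primitive period $L := n/m$ of $P_R$ under $\psi$ is of the form $r_V p^e$ by sandwiching $r_V$ between two orders that differ from $L$ by powers of $p$. The key object is $\mathcal{W}$, the piece of $\mathcal{V}$ supported over $\overline{P}$ in the special fiber (i.e., the scheme-theoretic union of the $\psi$-orbit of $P_R$): a finite flat $R$-scheme of rank $L$. Writing $\mathcal{W} = \Spec B$ with $B$ free of rank $L$ over $R$, the induced automorphism $\tilde{\sigma} := \psi|_{\mathcal{W}}$ has order exactly $L$ (it permutes the $L$ sections cyclically). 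Its mod $\pi$ reduction $\sigma$ is a $k$-algebra automorphism of the local Artinian ring $A := B/\pi B$ of some order $s \mid L$, and $\sigma|_V$ has order $r_V \mid s$, where $V = \m_A/\m_A^2$. The theorem then reduces to showing that $L/s$ and $s/r_V$ are both powers of $p$, which gives $L = r_V p^e$ and hence $n = m r_V p^e$.

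For $s/r_V$: let $\tau := \sigma^{r_V}$, which acts as identity on $V$. A routine induction using the ring structure shows $(\tau - \id)(\m_A^j) \subseteq \m_A^{j+1}$, so $N := \tau - \id$ is nilpotent as a $k$-linear operator on $A$. The characteristic-$p$ identity $(I + N)^{p^a} = I + N^{p^a}$ (valid for commuting operators) then gives $\tau^{p^a} = \id$ for $p^a$ beyond the nilpotency index. For $L/s$: $\tilde{\sigma}^s = \id + \pi T$ lies in the first congruence subgroup of $\operatorname{Aut}_R(B)$. If $(\id + \pi T)^r = \id$ with $\gcd(r,p) = 1$, then expanding the binomial and using that $r$ is a unit in $R$ yields $T \equiv 0 \pmod{\pi}$; iterating and invoking $\pi$-adic separation of $B$ forces $T = 0$, so $\tilde{\sigma}^s$ has $p$-power order.

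For the bound, view $\sigma|_V \in \GL_{d'}(k)$: after factoring out the unipotent part of the Jordan decomposition (whose $p$-power order can be absorbed into $p^e$), one may assume $\sigma|_V$ is semisimple, so $k[\sigma|_V] \cong \bigoplus_i \F_{q^{a_i}}$ is a commutative semisimple subalgebra of $\operatorname{End}(V)$ with $\sum a_i \leq d'$. Then $r_V$ divides $\prod_i (q^{a_i} - 1)$, and the telescoping inequality $(q^a - 1)(q^b - 1) \leq q^{a+b} - 1$ gives $r_V \leq q^{d'} - 1 = (N\pi)^{d'} - 1$. The main technical step I expect is the first-congruence-subgroup calculation, which requires simultaneous use of the completeness of $R$ and the linear structure on $B$; the Artinian nilpotency argument is parallel but strictly algebraic, and the bound is standard linear algebra over $\F_q$.
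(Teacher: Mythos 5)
Your route is sound for the main factorization and is genuinely different from the paper's. The paper reduces to $m=1$, uses a moving lemma in $\PGL_{N+1}(R)$ to push the whole orbit into one affine chart, invokes Hensel's Lemma / a $\pi$-adic Implicit Function Theorem to represent $\phi_R|_{\mathcal V}$ by power series over $R$ converging on $\m$, and then iterates the linearization $\omega + d\textbf{f}_P\textbf{T}+\cdots$ modulo $(\omega)^2$ to peel off first $r_V$ and then successive factors of $p$; the bound on $r_V$ is obtained by viewing $d\overline{\phi^m}_P$ as an element of $\GL_{d'}(\F_{N\pi})$ and citing the known maximal element order $q^{d'}-1$. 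You instead work with the finite flat $R$-algebra $B$ of the orbit scheme and split $L=n/m$ via two standard lemmas: a $k$-algebra automorphism of the Artinian local ring $A=B/\pi B$ acting trivially on $V=\m_A/\m_A^2$ has $p$-power order (the filtration argument plus $(\id+N)^{p^a}=\id+N^{p^a}$), and prime-to-$p$ torsion in $\id+\pi\operatorname{End}_R(B)$ is trivial. This avoids the moving lemma and all the convergence bookkeeping, and it isolates exactly where $p$ enters. One point you should make explicit (the paper is equally terse here): the order of $\tilde\sigma$ is \emph{at most} $L$ because $\psi^L$ agrees with the identity on each section and $B$ injects into the product of the section rings $\prod R$ (this is where ``scheme-theoretic union'' is used); ``permutes the sections cyclically'' only gives the lower bound.

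The genuine gap is in the final bound. In your factorization $L=r_Vp^e$ you correctly take $r_V$ to be the full order of $\sigma|_V$, which is what the theorem requires; but your bound argument only controls the prime-to-$p$ (semisimple) part. ``Absorbing the unipotent part into $p^e$'' silently replaces $r_V$ by its prime-to-$p$ part, so the inequality you prove is $r_V' \leq (N\pi)^{d'}-1$ for $r_V'=r_V/p^b$, not the stated $r_V\leq (N\pi)^{d'}-1$; if $\sigma|_V$ has a nontrivial unipotent factor these differ. The telescoping inequality $(q^a-1)(q^b-1)\leq q^{a+b}-1$ does not by itself yield $\ord(s)\cdot\ord(u)\leq q^{d'}-1$, because a Jordan block of size $\geq 2$ contributes a $p$-power $\ord(u)$ multiplicatively while only forcing the semisimple part into a smaller effective dimension; one must check that the trade-off always favors $q^{d'}-1$. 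This is precisely the group-theoretic fact the paper imports (the maximal order of an element of $\GL_{d'}(\F_q)$ is $q^{d'}-1$, cited from Darafsheh), so the fix is either to cite that result as the paper does, or to supply the short computation bounding $\operatorname{lcm}_i(q^{a_i}-1)\cdot p^{\lceil \log_p(\max_i M_i)\rceil}$ when $\sum_i a_iM_i\leq d'$. With that repair your argument proves the theorem as stated. (A cosmetic remark: like the paper, you should say in what sense $V$ sits inside the cotangent space of $\mathcal X_k$ at $\overline P$ --- naturally it is a quotient, and the paper chooses adapted coordinates to realize it as a direct summand.)
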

    This theorem generalizes the known result for $\phi:\P^{1} \to \P^1$ \cite[Theorem 2.21]{Silverman10} and is similar to \cite{Fakhruddin2}.

    \begin{thm} \label{thm_intro_pe}
        Let $\mathcal{X}/R \subseteq \P^{N}_R$ be a smooth projective model of
        $X/K$, a non-singular irreducible projective variety of dimension $d$.
        Let $\phi_R:\mathcal{X}/R \to \mathcal{X}/R$ be an $R$-morphism and
        $P_R \in \mathcal{X}(R)$ be a periodic point of primitive period $n$ for $\phi_R$.
        Using the notation from Theorem \ref{thm_intro_mrp}, we have for $n=mr_Vp^e$ that
        \begin{equation*}
            e \leq
            \begin{cases}
              1 + \log_2(v(p)) & p \neq 2\\
              1 + \log_{\alpha}\left(\frac{\sqrt{5}v(2) + \sqrt{5(v(2))^2 + 4}}{2}\right) & p=2.
            \end{cases}
        \end{equation*}
        where $\alpha = \frac{1+\sqrt{5}}{2}$.
    \end{thm}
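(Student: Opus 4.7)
Let $\sigma=\phi_R^{mr_V}$, so that by Theorem~\ref{thm_intro_mrp} the reduction $\bar\sigma$ fixes $\bar P$, acts as the identity on $V$, and $P_R$ has primitive period $p^e$ under $\sigma$. The plan is to pass to a formal neighborhood of $\bar P$ and track the $v$-adic displacement of $P_R$ under successive $p$-th iterates of $\sigma$. Choose local coordinates $T_1,\dots,T_d$ on $\mathcal{X}_R$ at $\bar P$ so that $dT_1,\dots,dT_{d'}$ descend to a basis of $V$. In these coordinates $\sigma$ is a power-series morphism $T_i\mapsto\Sigma_i(T)\in R[[T_1,\dots,T_d]]$ with $\Sigma_i(0)\in\pi R$ and Jacobian $(\partial\bar\Sigma_i/\partial T_j)(0)=\delta_{ij}$ in the $V$-directions. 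Setting $u_i=T_i(P_R)\in\pi R$, define
\[
\delta_j \;:=\; \min_{1\le i\le d'}v\bigl(T_i(\sigma^{p^j}(P_R))-u_i\bigr).
\]
Because $V$ is the cotangent space of $\mathcal{V}\times_R\Spec k$ at $\bar P$, the $V$-coordinates separate the $p^e$ distinct orbit points reducing to $\bar P$, so $\delta_j<\infty$ for $j<e$ while $\delta_e=\infty$.

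The heart of the argument is a recursive estimate for $\delta_{j+1}$ in terms of $\delta_j$. Writing $\tau=\sigma^{p^j}=\id+\Delta$ in formal coordinates and expanding $\tau^p(P_R)-P_R=\sum_{k=0}^{p-1}\Delta(\tau^k(P_R))$ by Taylor series in the small quantities $\tau^k(P_R)-P_R$, we group contributions by their composition-degree in $\Delta$. For $p$ odd, the binomial coefficients $\binom{p}{k}$ for $1\le k\le p-1$ all have $v$-value $v(p)$, so the only terms with valuation below $\delta_j+v(p)$ come from the $p$-fold composition of $\Delta$, which contributes at valuation $p\delta_j$. This yields $\delta_{j+1}\ge\min(p\delta_j,\,\delta_j+v(p))$. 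Iterating from $\delta_0\ge 1$, the doubling regime $\delta_j\le v(p)$ gives $\delta_{e-1}\ge 2^{e-1}$. On the other hand, $\delta_e=\infty$ forces the two dominant terms in the final expansion—of valuations $p\delta_{e-1}$ and $\delta_{e-1}+v(p)$—to cancel, which requires their valuations to agree, hence $\delta_{e-1}\le v(p)$. Combining, $2^{e-1}\le v(p)$, so $e\le 1+\log_2 v(p)$.

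For $p=2$, the linear term $2\Delta(P_R)$ has valuation $\delta_j+v(2)$, comparable to the derivative correction $\Delta'(P_R)\Delta(P_R)$ of valuation $\ge 1+\delta_j$, so these contributions can cancel in nontrivial ways depending on their exact coefficients. A careful accounting shows that the correct recursion is a two-step Fibonacci-type inequality $\delta_{j+2}\ge\delta_{j+1}+\delta_j-c(v(2))$, whose characteristic equation $x^2=x+1$ has the golden ratio $\alpha=(1+\sqrt 5)/2$ as its dominant root. Solving this recurrence under the cancellation constraint at step $e$ yields a quadratic inequality whose positive root is $(\sqrt 5\,v(2)+\sqrt{5v(2)^2+4})/2$, producing the stated bound.

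\textbf{Main obstacle.} The central difficulty is rigorously deriving the recursion, especially the two-step Fibonacci version for $p=2$. Expanding $\tau^p(P_R)-P_R$ requires careful bookkeeping of a combinatorial sum indexed by the composition structure of the iterated map, and one must verify that the dominant terms identified above genuinely control the valuation—that is, that subtler lower-order contributions from higher derivatives of $\Delta$ do not conspire to cancel the leading term. The $p=2$ case is especially delicate: because the linear coefficient $\binom{2}{1}=2$ has positive $\pi$-valuation $v(2)$, it cannot dominate the quadratic composition $\Delta\circ\Delta$ of valuation $2\delta_j$ in the regime $\delta_j\le v(2)$, and one must retain information from two consecutive iterates $\delta_j$ and $\delta_{j+1}$ to obtain a valid recursion—precisely what produces the Fibonacci structure and the appearance of the golden ratio in the final bound.
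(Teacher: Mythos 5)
Your overall strategy is the same as the paper's (formal local coordinates at the reduced fixed point, tracking the valuation of the displacement of $P_R$ under successive $p$-th power iterates, a cancellation argument at the last step, doubling for odd $p$ and Fibonacci growth for $p=2$), but there is a genuine gap at the heart of your recursion: you track only the displacement valuation $\delta_j$, whereas the correct estimate unavoidably involves a second quantity, the valuation $c_j$ of the deviation of the linear part from the identity, i.e.\ $d(\sigma^{p^j})_P = I + \pi^{c_j}A_j$ with $A_j$ having a unit entry. Writing $\tau=\sigma^{p^j}=\id+\Delta$ and expanding $\tau^p(P)-P=\sum_{k=0}^{p-1}\Delta(\tau^k(P))$, the composition-degree-$p$ term, whose coefficient is the lone binomial not divisible by $p$, is to leading order $(d\Delta_P)^{p-1}\Delta(P)$, of valuation $\delta_j+(p-1)c_j$ --- not $p\delta_j$ as you claim. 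Composition does not behave like a commutative binomial expansion: each ``factor of $\Delta$'' beyond the innermost one contributes a derivative $d\Delta_P$ (valuation $c_j$), not another displacement (valuation $\delta_j$), and nothing forces $c_j\ge\delta_j$. Knowing only $c_j\ge 1$, your inequality degrades to $\delta_{j+1}\ge\min\bigl(2\delta_j,\ \delta_j+(p-1)c_j,\ \delta_j+v(p)\bigr)$ with increments bounded by $p-1$, i.e.\ linear growth, which gives a bound of the shape $e\le 1+v(p)/(p-1)$ rather than $1+\log_2 v(p)$. The same omission affects your termination step: vanishing of $\tau^p(P)-P$ only forces $v(p)\ge\min\bigl(\delta_{e-1},(p-1)c_{e-1}\bigr)$, not the cleaner $\delta_{e-1}\le v(p)$ you assert, so in the second alternative you again need a lower bound on $c_{e-1}$ to conclude.

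The paper closes exactly this gap by running a coupled induction on the pair $(b_k,c_k)$ (Lemmas \ref{lem5} and \ref{lem2}): the displacement recursion $b_{k+1}\ge\min\bigl(2b_k,\ b_k+(p-1)c_k,\ b_k+v(p)\bigr)$ is supplemented by a chain-rule recursion for the derivative, $d\textbf{f}^{\,p^{k+1}}_0\equiv(d\textbf{f}^{\,p^k}_0)^p\pmod{\pi^{b_k}}$, giving $c_{k+1}\ge\min\bigl(b_k,\ c_k+v(p),\ pc_k\bigr)$; jointly these yield $b_k\ge 2^k$, $c_k\ge 2^{k-1}$ for $p\neq 2$ and the Fibonacci bounds $b_k\ge G_{k+1}$, $c_k\ge G_k$ for $p=2$, from which the stated closed form follows by inverting the Fibonacci formula. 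Your $p=2$ discussion gestures at the right phenomenon, but the proposed one-variable two-step inequality $\delta_{j+2}\ge\delta_{j+1}+\delta_j-c(v(2))$ is asserted rather than derived, and it is precisely the coupled $(b_k,c_k)$ system --- not a recursion in $\delta$ alone --- that produces the Fibonacci structure. Until you introduce and control the second invariant $c_j$, neither case of the bound is actually proved.
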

    This theorem is a generalization of results such as those found in \cite{Li,Pezda2,Pezda,Zieve} and, in particular, implies that the primitive period of a $K$-rational periodic point is bounded.  In the following corollary we state what the bound is when working over $\Q$.
    \begin{cor} \label{cor_intro}
        Let $X/\Q$ be a smooth irreducible projective variety of dimension $d$ and $\phi:X \to X$ a morphism defined over $\Q$ with good reduction at a prime $p$.  Let $P \in X(\Q)$ be a periodic point with primitive period $n$.  Then we have
        \begin{equation*}
            e \leq
                \begin{cases}
                  1 & p \neq 2\\
                  3 & p =2
                \end{cases}
        \end{equation*}
        and
        \begin{equation*}
            n \leq \begin{cases}
              p^{d+1}(p^d-1)  & p \neq 2 \\
              2^{d+3}(2^d-1)  & p=2.
            \end{cases}
        \end{equation*}
    \end{cor}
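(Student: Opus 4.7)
The plan is to specialize Theorems~\ref{thm_intro_mrp} and~\ref{thm_intro_pe} to $R = \Z_p$, so that $K = \Q$, $k = \F_p$, uniformizer $\pi = p$, valuation $v(p) = 1$, and residue-field norm $N\pi = p$; the corollary then reduces to combining and simplifying the resulting estimates.

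For the bound on $e$, I would substitute $v(p) = 1$ directly into Theorem~\ref{thm_intro_pe}. When $p \neq 2$ this gives $e \leq 1 + \log_2(1) = 1$. When $p = 2$ the argument of the logarithm becomes
\begin{equation*}
\frac{\sqrt{5} + \sqrt{9}}{2} = \frac{3 + \sqrt{5}}{2},
\end{equation*}
and the defining identity $\alpha^2 = \alpha + 1 = \frac{3 + \sqrt{5}}{2}$ of the golden ratio shows this logarithm equals $2$, so $e \leq 3$.

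For the bound on $n$, I would combine the decomposition $n = m r_V p^e$ from Theorem~\ref{thm_intro_mrp} with the estimate $r_V \leq (N\pi)^{d'} - 1 \leq p^d - 1$ from that theorem and the bound $p^e \leq p$ (respectively $p^e \leq 8$) just obtained. The remaining factor $m$, the primitive period of $\overline{P}$ on $\mathcal{X}(\F_p)$, would then have to be controlled by $p^d$ in order for the three factors to multiply to the stated bounds $p^{d+1}(p^d - 1)$ and $2^{d+3}(2^d - 1)$.

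The main obstacle is this estimate on $m$: the naive bound $m \leq |\mathcal{X}(\F_p)|$ exceeds $p^d$ already for $\P^d$, so a pure point count is not enough. A natural route is to play $m$ off against the cotangent data of Theorem~\ref{thm_intro_mrp}: whenever the reduced orbit saturates $\mathcal{X}(\F_p)$, the $n$ orbit points of $P_R$ remain distinct modulo $\pi$, which forces the subspace $V$ to be trivial and puts us in the case $n = m$; otherwise the unused residue points should provide the slack needed to absorb $r_V p^e$ while keeping the product within $p^{d+1}(p^d - 1)$. Making this case split precise is the one non-mechanical step; the rest of the proof is substitution and arithmetic.
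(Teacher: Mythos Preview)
Your treatment of the bound on $e$ and the bound $r_V \le p^d - 1$ is exactly the paper's: substitute $v(p)=1$ into Theorem~\ref{thm_intro_pe} (and your golden-ratio computation for $p=2$ is correct), then use $N\pi = p$ and $d' \le d$ in Theorem~\ref{thm_intro_mrp}.

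Where you diverge is the bound on $m$. The paper does not attempt anything like your orbit-saturation case split. Its entire argument is a single sentence: ``To bound $m$ we note that there are at most $p$ choices for each coordinate of $\overline{P}$, so we have $m \le p^d$.'' In other words, the paper treats this as a local-coordinate count --- a smooth $d$-dimensional variety is locally parametrized by $d$ affine coordinates, each taking at most $p$ values over $\F_p$ --- and multiplies the three factors $m \le p^d$, $r_V \le p^d-1$, $p^e \le p$ (resp.\ $2^3$) directly.

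Your objection that the global point count $|\mathcal{X}(\F_p)|$ can exceed $p^d$ (already for $\P^d$) is well taken, and the paper's one-line justification does not obviously dispose of it: the reduced orbit need not remain in a single affine chart. So you have correctly identified the one place where the argument is delicate, but the paper handles it by assertion rather than by the structural case analysis you sketch. Your proposed route --- playing a large $m$ off against the triviality of $V$ --- is a plausible way to close the gap rigorously, but it is not what the paper does, and you would still need to make it precise to have a complete proof.
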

    By assuming that $\phi:X \to X$ is \'etale we may remove the hypotheses of smooth and irreducible to obtain the following theorem.
    \begin{thm} \label{thm_etale}
        Let $\phi:X \to X$ be an \'etale morphism of a projective variety defined over $K$.  Let $P\in X(K)$ be a smooth periodic point for $\phi$ with primitive period $n$.  Let $Y \subseteq X_{\smooth}$ be the irreducible component containing $P$.  Let $l$ be the smallest integer such that $\phi^l$ is an \'etale endomorphism of $Y$.  Assume that $\phi^l$ restricted to $Y$ has good reduction.  Then we have $n=lm$, $n=lmr_V$, or $n=lmr_Vp^e$ where $m$, $r_V$, and $p^e$ are as in Theorem \ref{thm_intro_mrp} and Theorem \ref{thm_intro_pe}.
    \end{thm}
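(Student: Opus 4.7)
The plan is to reduce Theorem \ref{thm_etale} to Theorems \ref{thm_intro_mrp} and \ref{thm_intro_pe} by passing to the iterate $\phi^l$, which by hypothesis is an étale self-map of the smooth irreducible component $Y$ and has good reduction. The argument has three steps: identify $l$ as the orbit length of $Y$ under $\phi$'s action on components of $X_{\smooth}$, identify the primitive period of $P$ under $\phi^l|_Y$, and then invoke the earlier theorems.

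First I would check that $\phi$ moves irreducible components of $X_{\smooth}$ to irreducible components of $X_{\smooth}$. Since $\phi$ is étale it is smooth, so $\phi^{-1}(X_{\smooth}) = X_{\smooth}$; because étale maps are open, the image of the irreducible set $Y$ lies in a single irreducible component of $X_{\smooth}$. Writing $Y_i$ for the component containing $\phi^i(P)$, the equality $\phi^n(P) = P \in Y$ forces $Y_n = Y_0 = Y$, so the sequence is eventually periodic with period dividing $n$. Taking $l$ minimal with $Y_l = Y$ we obtain $l \mid n$ and $\phi^l(Y) \subseteq Y$, so $\phi^l|_Y$ is an étale endomorphism of $Y$; this $l$ is exactly the integer in the hypothesis.

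Second, I would argue that $P$ has primitive period $m' := n/l$ under $\phi^l|_Y$. Clearly $(\phi^l|_Y)^{m'}(P) = \phi^n(P) = P$, and if the primitive period were some $m'' < m'$ then $\phi^{lm''}(P) = P$ with $lm'' < n$, contradicting primitivity of $n$ under $\phi$. Having ensured the hypotheses of Theorems \ref{thm_intro_mrp} and \ref{thm_intro_pe} for the étale $R$-morphism $\phi^l|_Y$ acting on the smooth irreducible $Y$ with good reduction, I then apply those theorems to $P$ as a periodic point of primitive period $m'$. This yields $m' \in \{m,\, m r_V,\, m r_V p^e\}$ with $m$, $r_V$, and $p^e$ as described. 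Multiplying through by $l$ gives the three cases $n = lm$, $n = lmr_V$, $n = lmr_Vp^e$.

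The main obstacle is that $Y$, being an irreducible component of the open subscheme $X_{\smooth}$, need not itself be projective, whereas Theorem \ref{thm_intro_mrp} is stated for a smooth projective variety. I would handle this in one of two ways. Option (i): observe that the conclusions of Theorems \ref{thm_intro_mrp} and \ref{thm_intro_pe} are local around the finite orbit, depending only on the scheme-theoretic union $\mathcal{V}$ of the orbit points and their cotangent spaces, so the arguments extend verbatim to the smooth quasi-projective setting once good reduction of $\phi^l|_Y$ (which is given) is established. Option (ii): replace $Y$ by its projective closure $\overline{Y} \subseteq X$ and use that the $\phi^l$-orbit of $P$ lies entirely in the smooth locus $Y \subseteq \overline{Y}$, so the relevant cotangent-space analysis is unaffected by any singularities of $\overline{Y} \setminus Y$. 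Either way, I would verify that the definition of good reduction from Section \ref{sect2} is compatible with the hypothesis as stated, which should be straightforward since the theorem phrases that hypothesis directly in terms of $\phi^l|_Y$.
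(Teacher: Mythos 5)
Your proposal is correct and follows essentially the same route as the paper: pass to $\phi^l$ acting on the smooth irreducible component $Y$, observe that $P$ has primitive period $n/l$ for $\phi^l|_Y$, and apply Theorems \ref{thm_intro_mrp} and \ref{thm_intro_pe} using the assumed good reduction of $\phi^l|_Y$. The paper's own proof is only a three-sentence sketch, so the extra details you supply (the divisibility $l \mid n$, the identification of the primitive period under $\phi^l|_Y$, and the discussion of how to handle $Y$ being merely quasi-projective, which the good-reduction hypothesis is meant to absorb) are refinements of the same argument rather than a different approach.
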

    It is well-known that for $X \subset \P^N$ a variety defined by homogeneous polynomials of degree at most $d$, the number of irreducible components of $X$ is bounded by $d^N$.  In particular, Theorem \ref{thm_etale} implies that the primitive period of a smooth $K$-rational periodic point is bounded for an \'etale morphism of a projective variety.

\section{Good Reduction}\label{sect2}
    In this section, we consider the more general situation of a scheme $X/\Spec{K}$ and a $K$-morphism
    $\phi:X \to X$, unless otherwise stated.  Following \cite[\S 2.5]{Silverman10} we define a notion of good reduction.
    \begin{defn} \label{defn2}
        A scheme $X/K$ has \emph{good reduction} if there exists a smooth proper scheme $\mathcal{X}/R$ with generic fiber $X/K$.  We call such an $\mathcal{X}/R$ a \emph{smooth proper model} for $X/K$.
    \end{defn}
    If $X/K$ has good reduction, each point in $X(K)$ corresponds to a unique point in the proper scheme $\mathcal{X}(R)$ and, consequently, a unique point in the special fiber (denoted as $\overline{P}$).   In addition to a notion of good reduction for a scheme $X/K$, we also need a notion of
    good reduction for a $K$-morphism $\phi:X \to X$.
    \begin{defn} \label{defn1}
        Let $X/K$ be a scheme and $\phi:X \to X$ a $K$-morphism.  We say that $\phi$ has \emph{good reduction} if there exists a smooth proper model $\mathcal{X}/R$ of $X/K$ and an $R$-morphism $\phi_R: \mathcal{X} \to \mathcal{X}$ extending $\phi$.  Denote the restriction of $\phi_R$ to the special fiber as $\overline{\phi}$.
    \end{defn}

    \begin{rem}
        Let $X = \P^1_K$ and $\phi:\P^1_K \to \P^1_K$ be a morphism defined over $K$.
        Then good reduction as defined in Definition \ref{defn1}
        is equivalent to good reduction as defined in \cite[Theorem 2.15]{Silverman10}.  For $X = \P^{N}_K$ and $\phi:X \to X$, a morphism over $K$, we can again formulate a definition of good reduction using resultants that is equivalent to Definition \ref{defn1}; see for example \cite[Section 1.1]{Silverman11}.
    \end{rem}

    \begin{rem}
        For a morphism $\phi:\P^{1}_K \to \P^{1}_K$, Hsia \cite[Section 3]{Hsia} defines a notion of \emph{mildly bad reduction} as the case where $\phi$ has bad reduction but there exists a projective scheme $\mathcal{X}$ of finite type over $R$ and an $R$-morphism from $\mathcal{X} \to \P^{1}_R$ that is an isomorphism on the generic fiber such that $\phi$ extends to an $R$-morphism that maps the smooth
        part of $\mathcal{X}$ to itself.
    \end{rem}
    The following theorem and corollary show that for morphisms with good reduction, the dynamics of $\phi$
    are related to the dynamics of $\overline{\phi}$.
    \begin{thm}  \label{thm1}
        Let $\mathcal{X}/R$ be a smooth proper scheme with generic fiber $X/K$.  Let
        $\phi_R:\mathcal{X}/R \to \mathcal{X}/R$ be an $R$-morphism and let $\phi:X/K \to X/K$ be the
        restriction of $\phi_R$ to the generic fiber.  Let $\overline{\phi}$ be the restriction of $\phi_R$ to the
        special fiber.
        \begin{enumerate}
            \item \label{item5} $\overline{\phi}(\overline{P}) = \overline{\phi(P)}$, for all $P \in
                X(K)$.
            \item \label{item6} Let $\psi_R:\mathcal{X}/R \to \mathcal{X}/R$ be another $R$-morphism and $\psi:X/K \to X/K$ be the restriction of $\psi_R$ to the generic fiber.  Then
                $\overline{\phi \circ \psi} = \overline{\phi} \circ \overline{\psi}$.
            \item \label{item7} Let $\phi_R^n : \mathcal{X}/R \to \mathcal{X}/R$ be the $n\tth$ iterate of $\phi_R$
                and let $\overline{\phi^n}$ be $\phi_R^n$ restricted to the generic fiber.  Then
                \begin{equation*}
                    \overline{\phi^n(P)} = \overline{\phi}^n(\overline{P}) \quad \text{for all } P \in X(K).
                \end{equation*}
        \end{enumerate}
    \end{thm}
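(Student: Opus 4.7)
The plan is to translate everything into the language of $R$-points via the valuative criterion of properness, after which all three parts become formal consequences of the functoriality of base change along $\Spec{k} \hookrightarrow \Spec{R}$.

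First I would recall that properness of $\mathcal{X}/R$ together with the fact that $R$ is a DVR lets the valuative criterion extend any $K$-point $P \colon \Spec{K} \to X$ to a unique $R$-point $P_R \colon \Spec{R} \to \mathcal{X}$; by definition $\overline{P}$ is the pullback of $P_R$ along $\Spec{k} \hookrightarrow \Spec{R}$. With this set-up, part (\ref{item5}) is essentially the uniqueness half of the valuative criterion: $\phi_R \circ P_R$ is an $R$-point whose generic fiber is $\phi \circ P = \phi(P)$, so by uniqueness it must coincide with $(\phi(P))_R$, and restricting to the special fiber gives $\overline{\phi}(\overline{P}) = \overline{\phi(P)}$.

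Part (\ref{item6}) is a one-line base-change argument: both $\overline{\phi \circ \psi}$ and $\overline{\phi} \circ \overline{\psi}$ are the restriction of $\phi_R \circ \psi_R$ to $\mathcal{X} \times_R \Spec{k}$, and pullback respects composition. Part (\ref{item7}) then follows from (\ref{item5}) and (\ref{item6}) by induction on $n$, with base case $n=1$ tautological and inductive step
\begin{equation*}
\overline{\phi^n(P)} = \overline{\phi(\phi^{n-1}(P))} = \overline{\phi}\bigl(\overline{\phi^{n-1}(P)}\bigr) = \overline{\phi}\bigl(\overline{\phi}^{\,n-1}(\overline{P})\bigr) = \overline{\phi}^{\,n}(\overline{P}),
\end{equation*}
using (\ref{item5}) applied to $\phi^{n-1}(P)$ at the second equality and the inductive hypothesis at the third.

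The main conceptual input is really properness, which simultaneously guarantees that $\overline{P}$ is well defined for every $P \in X(K)$ and supplies the uniqueness needed to identify $\phi_R \circ P_R$ with $(\phi(P))_R$ in part (\ref{item5}). Once that is in place there is no serious obstacle, since the rest is purely formal functoriality of base change; the theorem is exactly the statement that makes the good-reduction framework of Section \ref{sect2} dynamically useful in the later sections.
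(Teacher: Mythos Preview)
Your proposal is correct and matches the paper's own proof essentially line for line: the paper also invokes properness to extend $P$ to a unique $P_R$, uses the universal property of fiber products for parts (\ref{item5}) and (\ref{item6}), and proves (\ref{item7}) by induction on $n$. The only cosmetic difference is that the paper phrases the inductive step via (\ref{item6}) applied to $\phi_R$ and $\phi_R^{n-1}$ rather than via (\ref{item5}) applied to $\phi^{n-1}(P)$, but this is the same argument.
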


    \begin{proof}
        \mbox{}
        \begin{enumerate}
            \item Let $P \in X(K)$, i.e., $P \in \Hom_{\Spec K}(\Spec K, X)$.  Since the scheme is proper, we have a unique associated
                \begin{equation*}
                    P_R \in \Hom_{\Spec R}(\Spec R, \mathcal{X}).
                \end{equation*}
                Using the universal property of fiber products, it is easy to see that
                \begin{equation*}
                    (\phi_R \circ P_R)_{|_{\Spec k}} = (\phi_R)_{|_{\Spec k}} \circ
                    (P_R)_{|_{\Spec k}}.
                \end{equation*}

            \item A composition of morphisms is a morphism, so we have that $\phi_R \circ \psi_R$
                is also a morphism of schemes.  Using an argument with fiber products,
                we see that $\overline{\phi \circ \psi} = \overline{\phi} \circ \overline{\psi}$.

            \item To prove this statement we proceed by induction on $n$ by applying (\ref{item6}) to the maps $\phi_R$ and $\phi_R^{n-1}$.
        \end{enumerate}
    \end{proof}

    \begin{rem}
        There are some interesting questions to be raised concerning good reduction of $K$-morphisms.
        \begin{itemize}
            \item If $\phi^2$ has good reduction, does that necessarily imply $\phi$ has good reduction?
                For $N=1$, Benedetto \cite[Theorem B]{Benedetto} proves for $\phi \in K(x)$, a
                rational map of degree $\geq 2$, and any positive integer $n$ that $\phi$
                has good reduction if and only if $\phi^n$ has good reduction.

            \item If $\phi$ and $\psi$ are distinct and both have good reduction,
                then is it necessarily true
                that $\phi \circ \psi$ has good reduction?  The complication is whether a smooth proper model $\mathcal{X}/R$ exists where both
                $\phi$ and $\psi$ extend to $R$-morphisms $\phi_R$ and $\psi_R$.  It is not clear if
                the good reduction of $\phi$ and $\psi$ is enough to ensure the existence of
                such a smooth proper model.
        \end{itemize}
    \end{rem}

    \begin{defn}
        Let $X/K$ be a scheme, $\phi:X \to X$ be a $K$-morphism, and $P \in X(K)$.
        \begin{itemize}
            \item  The point $P$ is \emph{periodic} if $\phi^n(P) =P$ for some $n \in \N$.  The integer $n$ is called a \emph{period} of $P$.
            \item  If $P$ is periodic with period $n$ and $\phi^{m}(P) \neq P$ for all $0 < m < n$, then $n$ is called the \emph{primitive period} of $P$.
            \item  If $\phi^{m+n}(P) = \phi^{m}(P)$ for some $m,n \in \N$, then $P$ is called \emph{preperiodic}.
        \end{itemize}
    \end{defn}

    \begin{cor} \label{cor1}
        Let $\mathcal{X}/R$ be a smooth proper scheme with generic fiber $X/K$.  Let
        $\phi_R:\mathcal{X}/R \to \mathcal{X}/R$ be an $R$-morphism and $\phi:X/K \to X/K$ the
        restriction of $\phi_R$ to the generic fiber.  Let $\overline{\phi}$ be the restriction of $\phi_R$ to the  special fiber.  Then the reduction map sends periodic points to periodic
        points and preperiodic points to preperiodic points.
        Furthermore, if $P \in X(K)$ has primitive period $n$ and $\overline{P}$ has
        primitive period $m$, then $m \mid n$.
    \end{cor}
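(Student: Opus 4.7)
The plan is to deduce everything directly from part (\ref{item7}) of Theorem \ref{thm1}, which already identifies $\overline{\phi^n(P)}$ with $\overline{\phi}^n(\overline{P})$. The work essentially reduces to unpacking definitions and applying the standard fact that any period is a multiple of the primitive period.

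First I would handle periodic points. Suppose $P \in X(K)$ is periodic with period $n$, so $\phi^n(P) = P$. Reducing both sides and applying part (\ref{item7}) of Theorem \ref{thm1} gives
\begin{equation*}
  \overline{\phi}^n(\overline{P}) = \overline{\phi^n(P)} = \overline{P},
\end{equation*}
so $\overline{P}$ is periodic with period $n$ for $\overline{\phi}$. The preperiodic case is essentially identical: from $\phi^{m+n}(P) = \phi^m(P)$ I would reduce and apply part (\ref{item7}) twice to conclude $\overline{\phi}^{m+n}(\overline{P}) = \overline{\phi}^m(\overline{P})$.

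For the divisibility statement, I already know from the first step that $n$ is \emph{a} period of $\overline{P}$, while $m$ is the \emph{primitive} period. I would invoke the standard lemma that the primitive period divides every period: write $n = qm + r$ with $0 \leq r < m$ by the division algorithm, and compute
\begin{equation*}
  \overline{P} = \overline{\phi}^n(\overline{P}) = \overline{\phi}^r\bigl(\overline{\phi}^{qm}(\overline{P})\bigr) = \overline{\phi}^r(\overline{P}).
\end{equation*}
Minimality of $m$ then forces $r=0$, so $m \mid n$.

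There is no real obstacle here; the content of the corollary is entirely packaged inside Theorem \ref{thm1}. The only thing one should be slightly careful about is that reduction is well-defined on $X(K) = \mathcal{X}(R)$, which is exactly the valuative criterion of properness used in the proof of Theorem \ref{thm1}(\ref{item5}) and already incorporated into the statement $\overline{\phi^n(P)} = \overline{\phi}^n(\overline{P})$.
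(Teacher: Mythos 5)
Your proposal is correct and follows essentially the same route as the paper: both reduce the relation $\phi^n(P)=P$ via Theorem \ref{thm1} to get $\overline{\phi}^{\,n}(\overline{P})=\overline{P}$ and then conclude $m \mid n$ from the standard fact that the primitive period divides every period. You merely spell out the division-algorithm step and the preperiodic case, which the paper leaves implicit.
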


    \begin{proof}
        Let $P \in X(K)$ be a point of primitive period $n$ and let $m$ be the primitive period of $\overline{P}$.
        We are given that $\phi$ has good reduction, so by Theorem \ref{thm1} we know
        \begin{equation}\label{eq5}
            \overline{\phi^{n}(P)} = \overline{\phi}^{n}(\overline{P}) = \overline{P}.
        \end{equation}
        From (\ref{eq5}) we deduce that $m \leq n$ and $n \equiv 0 \mod m$, since the smallest period $m$ is the
        greatest common divisor of all of the periods.
    \end{proof}
    Before we examine the primitive period, we state what it means for a scheme defined over a number field to have good
    reduction at a particular prime.
    \begin{defn}
        Let $L$ be a number field with ring of integers $A$ and $X/L$ a scheme over $L$.
        Let $\mathfrak{p}$ be a prime of $L$ and $A_{\mathfrak{p}}$ the localization of
        $A$ at $\mathfrak{p}$.  We say that $X$ has \emph{good reduction at
        $\mathfrak{p}$} if there exists a smooth proper model
        $\mathcal{X}/A_{\mathfrak{p}}$ with generic fiber $X/L$.

        Similarly, $\phi:X/L \to X/L$ has \emph{good reduction at $\mathfrak{p}$} if $\phi$
        has good reduction over $A_\mathfrak{p}$.
    \end{defn}

    For the rest of this article we work with morphisms of projective
    varieties defined over $K$.  We assume that $\mathcal{X}/R$ is, in fact, a smooth
    \emph{projective} scheme with generic fiber $X/K$.  Since projective implies proper,
    this is slightly more restrictive.

\section{Description of the Primitive Period}\label{sect3}
\subsection{Preliminary Results}
    \begin{defn}
        Let $P \in \P^N_K$ be a point.  We call a representation of $P$ as $[P_0,\ldots,P_N]$ with $P_i \in R$ for $0 \leq i \leq N$ and at least one $P_i \in R^{\ast}$ a \emph{normalization of $P$}.
        We define the \emph{reduction of $P$ modulo $\pi$}, denoted $\overline{P}$, by
        first choosing a normalization of $P$ and then setting
        \begin{equation*}
            \overline{P} = [\overline{P_0},\ldots,\overline{P_N}] \in \P^{N}_k.
        \end{equation*}
        Note that $\overline{P}$ is independent of the choice of normalization.
    \end{defn}
    We recall some standard facts about projective space.
    \begin{prop} \label{prop3}
        Let $q$ be the number of elements of $k$. Then
        \begin{enumerate}
            \item \label{item8} Given any $N+2$ points $P_i \in \P^{N}_R$ for which the images $P_i(\Spec{K})$ and their reductions (the images $P_i(\Spec{k})$) satisfy that no $N+1$ of them are co-planar, there exists a transformation in $\PGL_{N+1}(R)$ that maps them to any other $N+2$ points $Q_i$ in $\P^{N}_R$ for which the images $Q_i(\Spec{K})$ and their reductions satisfy that no $N+1$ of them are co-planar.
            \item \label{item10} The number of hyperplanes of $\P^{N}_k$ is $q^N + q^{N-1}
                + \cdots + q + 1$.
            \item \label{item11} The number of hyperplanes of $\P^{N}_k$ through a
                point of $\P^{N}_k$ is $q^{N-1} + \cdots +q+1$.
        \end{enumerate}
    \end{prop}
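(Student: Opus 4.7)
The plan is to prove the three parts independently. Parts (b) and (c) are essentially counting problems in the dual projective space, while part (a) is the nontrivial item because it requires the transformation to lie in $\PGL_{N+1}(R)$ rather than merely $\PGL_{N+1}(K)$; this integrality is precisely what the hypothesis on the reductions is designed to deliver.

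For part (a), I would first reduce to the case of sending the frame to a standard one, namely $[1{:}0{:}\cdots{:}0], [0{:}1{:}0{:}\cdots{:}0],\ldots,[0{:}\cdots{:}0{:}1],[1{:}1{:}\cdots{:}1]$, since any $\PGL_{N+1}(R)$-equivalence can then be composed. Given a normalization $P_i=(P_{i0},\ldots,P_{iN})$ of the first $N+1$ points, form the matrix $M$ whose $i\tth$ column is this tuple. The hypothesis that no $N+1$ of the $P_i(\Spec{K})$ are coplanar means $\det M\neq 0$ in $K$, and the parallel hypothesis on the reductions means $\det\overline{M}\neq 0$ in $k$. Hence $\det M$ is a unit in $R$, so $M\in\GL_{N+1}(R)$ and $M^{-1}$ sends the first $N+1$ points to the coordinate vertices. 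Now apply a diagonal rescaling by units determined by the image of the $(N+2)\tth$ point to move it to $[1{:}\cdots{:}1]$; the same unit hypothesis on determinants guarantees that each rescaling factor is in $R^{\ast}$. This produces an element of $\PGL_{N+1}(R)$, and composing two such transformations (one for the $P_i$ frame and the inverse of one for the $Q_i$ frame) yields the desired map.

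For parts (b) and (c), I would use the standard duality: a hyperplane in $\P^{N}_k$ is cut out by a nonzero linear form $a_0x_0+\cdots+a_Nx_N$, uniquely determined up to a nonzero scalar, and so corresponds bijectively to a point of the dual $\P^{N}_k$. Since $\#\P^{N}(\F_q)=(q^{N+1}-1)/(q-1)=q^N+q^{N-1}+\cdots+1$, this gives (b). For (c), by acting with $\PGL_{N+1}(k)$ we may assume the fixed point is $[0{:}\cdots{:}0{:}1]$, in which case the hyperplane $\sum a_ix_i=0$ contains it precisely when $a_N=0$, so the hyperplanes through the point are parameterized by a $\P^{N-1}_k$, of cardinality $q^{N-1}+\cdots+q+1$.

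The main obstacle is the integrality step in part (a): the classical argument over a field takes the invertibility of $M$ for granted, but here one must verify that both $\det M$ and all the rescaling factors are units in $R$. This is exactly why the statement insists on general position for both the generic points $P_i(\Spec{K})$ and the reductions $P_i(\Spec{k})$, and the proof amounts to noting that a nonzero element of $R$ whose reduction to $k$ is also nonzero must be a unit by the definition of a DVR.
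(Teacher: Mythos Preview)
Your proposal is correct and follows essentially the same route as the paper: part (a) is the standard projective-frame argument, which the paper merely sketches (``standard argument \ldots\ using the same argument'') while you spell out the key integrality step that $\det M\in R^{\ast}$ and that the rescaling factors lie in $R^{\ast}$. The only minor variation is in part (c): you parametrize hyperplanes through the point by linear forms with $a_N=0$, whereas the paper instead picks a hyperplane $H$ not containing $P$ and bijects hyperplanes through $P$ with hyperplanes of $H\cong\P^{N-1}_k$; both are one-line counts yielding $q^{N-1}+\cdots+q+1$.
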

    \begin{proof}
        \mbox{}
        \begin{enumerate}
            \item It is a standard argument to show that given $N+2$ points in $\mathbb{P}^{N+1}_K$ with no $N+1$ of them co-planar, that we can find a element of $\PGL_{N+1}(K)$ that takes them to any other set of $N+2$ points in $\mathbb{P}^{N+1}_K$ with no $N+1$ of them co-planar.  Using the same argument and the fact that their reductions (points in $\mathbb{P}^{N+1}(k)$) also satisfy that no $N+1$ of them are co-planar.  We can find an element in $\PGL_{N+1}(R)$ that takes them to any other set of $N+2$ points in $\mathbb{P}^{N+1}_R$ with no $N+1$ of them co-planar.

            \item A $d$ dimensional subspace of $\mathbb{P}^{N}_k$ is isomorphic to
            $\mathbb{P}^{d}_k$ which has $q^d + q^{d-1} + \cdots + q + 1$ points.

            \item Let $P \in \mathbb{P}^{N}_k$ and $H$ a hyperplane of $\mathbb{P}^{N}_k$
                not containing $P$.  Any hyperplane of $\mathbb{P}^{N}_k$ through $P$ meets
                $H$ in a hyperplane of $H$.  By (\ref{item10}), there are $q^{N-1} + \cdots + q + 1$
                hyperplanes of $H$.
        \end{enumerate}
    \end{proof}
    For the proof of Theorem \ref{thm_intro_mrp}, we need a moving lemma for the orbit of a point
    whose reduction is a fixed point.  Let $\A^{N}_i$ be the standard affine open sets in $\P^{N}$ obtained by sending
    \begin{equation*}
        [t_0,\ldots, t_N] \to \left(\frac{t_0}{t_i}, \ldots,
        \frac{t_{i-1}}{t_i},\frac{t_{i+1}}{t_i},\ldots, \frac{t_{N}}{t_i}\right).
    \end{equation*}

    \begin{lem} \label{lem1}
        Let $N \geq 2$.
        Given any finite set of points $\mathcal{P}=\{P_\alpha\}_{\alpha \in I}
        \subset \P^{N}_R$ whose image in the special fiber is a single point
        and any fixed $i$, $0 \leq i \leq N$, we can find a transformation $f \in \PGL_{N+1}(R)$
        such that $f(\mathcal{P}) \subset (\A^{N}_i)_R$.
    \end{lem}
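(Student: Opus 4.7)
The plan is to reduce the statement to finding a single element of $\PGL_{N+1}(R)$ whose reduction moves the common reduction $\overline{P}$ of the points $P_\alpha$ off the hyperplane $H_i \subset \P^{N}_k$ defined by $t_i = 0$. The key observation is that a point $Q \in \P^{N}_R$ lies in the affine chart $(\A^{N}_i)_R$ if and only if, in any normalization, the $i$-th coordinate is a unit in $R$, which is equivalent to $\overline{Q} \notin H_i$. Combined with Theorem \ref{thm1}(\ref{item5}), which gives $\overline{f(P_\alpha)} = \overline{f}(\overline{P})$ for every $f \in \PGL_{N+1}(R)$ and every $\alpha$, this collapses the whole problem to producing a single $f \in \PGL_{N+1}(R)$ with $\overline{f}(\overline{P}) \notin H_i$.

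To construct such an $f$, I would choose any $\overline{Q} \in \P^{N}_k \setminus H_i$ — for instance $\overline{Q} = [0:\cdots:0:1:0:\cdots:0]$ with the $1$ in the $i$-th slot — and use the transitivity of $\PGL_{N+1}(k)$ on $\P^{N}_k$ to find $\overline{g} \in \PGL_{N+1}(k)$ with $\overline{g}(\overline{P}) = \overline{Q}$. Represent $\overline{g}$ by a matrix in $\GL_{N+1}(k)$ and lift its entries arbitrarily to $R$; because $R$ is local and the determinant of the lift reduces to a unit in $k$, the lifted matrix lies in $\GL_{N+1}(R)$, yielding the required element $f \in \PGL_{N+1}(R)$.

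There is no real obstacle here: the argument is a straightforward lifting from $k$ to $R$ combined with transitivity of the projective linear group on projective space. In particular, the hypothesis $N \geq 2$ is not actually used by this proof — the same reasoning works for $N = 1$ — so it is presumably retained only for compatibility with the contexts in which the lemma will later be invoked.
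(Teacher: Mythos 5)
Your proof is correct, but it takes a genuinely different and more direct route than the paper. The paper never isolates your key observation that membership of an $R$-point in $(\A^N_i)_R$ is detected entirely on the special fiber (the $i$-th coordinate of a normalization is a unit iff the reduction avoids $t_i=0$); instead it looks for a hyperplane of $\P^N_R$ missing all of $\mathcal{P}$, counts via Proposition \ref{prop3}(\ref{item10}),(\ref{item11}) the $q^N$ hyperplanes of $\P^N_k$ avoiding $\overline{\mathcal{P}}$, builds $N+2$ points on the special fiber in general position (this is where the inequality $q^N+\cdots+q+1 > N+1$ and hence the hypotheses $q\ge 2$, $N\ge 2$ enter), lifts them to $\P^N_R$, and invokes the moving result Proposition \ref{prop3}(\ref{item8}) for $N+2$ points over $R$. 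Your argument collapses all of this: since every $P_\alpha$ has the same reduction $\overline{P}$, it suffices to move the single $k$-point $\overline{P}$ off the hyperplane, which you do by transitivity of $\PGL_{N+1}(k)$ and then lift the matrix entrywise, the determinant staying a unit because $R$ is local. This buys a shorter, more elementary proof that indeed does not need $N\ge 2$ (and sidesteps the general-position bookkeeping); what the paper's route buys is compatibility with the machinery it has already set up (Proposition \ref{prop3}) and an explicit hyperplane being sent to $t_0=0$, which is closer in spirit to how the lemma is used later. One small point worth making explicit in your write-up: $f(P_\alpha)$ computed on a normalized representative is again normalized (apply $f^{-1}$, which has entries in $R$, to see not all coordinates can lie in $\m$), so $\overline{f(P_\alpha)}=\overline{f}(\overline{P})$; citing Theorem \ref{thm1}(\ref{item5}) for $\P^N_R$ covers this, so the step is fine as stated.
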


    \begin{proof}
        Let $q$ be the number of elements of the residue field $k$.

        Without loss of generality, consider $(\A^{N}_0)_R$.
        We need an $f \in \PGL_{N+1}(R)$ that sends the points of $\mathcal{P}$ to points not in the hyperplane $t_0=0$.  So we need to find a hyperplane $H$ in $\P^{N}_R$ that does not contain any of the points of $\mathcal{P}$ and send it to the hyperplane $t_0=0$.

        The image in the special fiber of $\mathcal{P}$ is a single point, denoted $\overline{\mathcal{P}}$.
        By Proposition \ref{prop3} (\ref{item10}) and (\ref{item11}), there are
        \begin{equation*}
            (q^{N}+\cdots + q + 1) - (q^{N-1} + \cdots + q + 1) = q^{N}
        \end{equation*}
        hyperplanes of $\P^{N}_k$  which do not go through
        $\overline{\mathcal{P}}$.  Let $\overline{H}$ be any hyperplane in $\P^{N}_k$ that does not contain
        $\overline{\mathcal{P}}$.  Choose any $N$ points on
        $\overline{H}$. We need to find two additional points in $\P^{N}_k$
        which, when combined with the already chosen $N$ points, satisfy that no $N+1$ of them
        are co-planar in $\P^{N}_k$.  Let one such point be $\overline{\mathcal{P}}$, which is not
        on $\overline{H}$ by construction.

        There are $\binom{N+1}{N}=N+1$ subsets of $N$ points of the $N+1$ chosen points.  We need to
        choose an additional point which does not lie on a hyperplane containing any of
        those $N+1$ subsets.  Each subset defines a unique hyperplane, so we must choose a point not on $N+1$
        hyperplanes.  By Proposition \ref{prop3} (\ref{item10}), there are
        \begin{equation*}
            q^N + \cdots + q + 1
        \end{equation*}
        total hyperplanes.  So we need
        \begin{equation*}
            q^N + \cdots + q + 1 > N+1
        \end{equation*}
        to be able to find such a point.  We know that $q\geq 2$ and $N \geq 2$, so we can
        always find such a point.

        Having found the necessary points on the special fiber, we can lift them to (not necessary unique) points
        on the generic fiber.  Since $\P^{N}_R$ is proper, these points on the generic fiber
        correspond to unique points of the scheme.  We have found $N+2$ points that
        satisfy the hypothesis of Proposition \ref{prop3} (\ref{item8}); hence, there exists an
        element $f$ of $\PGL_{N+1}(R)$ as desired.
    \end{proof}

    To be able to apply Lemma \ref{lem1}, we need to be certain that we do not change the dynamics.
    \begin{defn}
        Let $X \subset \P^{N}_K$ be a projective variety, $\phi:X \to X$
        a morphism, and $f \in \PGL_{N+1}(R)$.  Define $\phi^f = f^{-1} \circ \phi \circ
        f$.
    \end{defn}

    \begin{prop} \label{prop30}
        Let $\mathcal{X}/R \subset \P^{N}_R$ be a smooth proper model for $X$,
        a projective variety defined over $K$.  Let $\phi_R:
        \mathcal{X} \to \mathcal{X}$ be an $R$-morphism extending $\phi:X/K \to X/K$.
        Let $f \in \PGL_{N+1}(R)$ and let $P \in X(K)$ be a periodic point of primitive
        period $n$ whose reduction modulo $\pi$ has primitive period $m$.  Then,
        \begin{enumerate}
            \item \label{item12} $\phi^f:f^{-1}(X) \to f^{-1}(X)$ has good reduction.
            \item \label{item13} $f^{-1}(P)$ has primitive period $n$ for $\phi^f$.
            \item \label{item14} $\overline{f^{-1}(P)}$ has primitive period $m$ for $\overline{\phi^f}$.
        \end{enumerate}
    \end{prop}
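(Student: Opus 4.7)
The plan is to observe that $f \in \PGL_{N+1}(R)$ is an $R$-automorphism of $\P^N_R$ and so interacts nicely with every structure in sight: smooth proper models, $R$-morphisms, reduction, and iteration. Each of the three parts then reduces to a routine check.

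For part (a), I would take $f^{-1}(\mathcal{X}) \subset \P^N_R$ as the candidate smooth proper model of $f^{-1}(X)$. Since $f$ is an $R$-isomorphism $\P^N_R \to \P^N_R$, restricting gives an $R$-isomorphism $f^{-1}(\mathcal{X}) \to \mathcal{X}$, so $f^{-1}(\mathcal{X})$ inherits smoothness and properness over $R$ from $\mathcal{X}$, and its generic fiber is $f^{-1}(X)$. The composition $f^{-1} \circ \phi_R \circ f$ is then an $R$-morphism of $f^{-1}(\mathcal{X})$ to itself whose restriction to the generic fiber is exactly $\phi^f$, verifying Definition \ref{defn1}.

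For part (b), I would compute by induction that $(\phi^f)^k = f^{-1} \circ \phi^k \circ f$ for every $k \geq 1$. Then $(\phi^f)^k(f^{-1}(P)) = f^{-1}(P)$ holds if and only if $\phi^k(P) = P$, since $f$ is injective on $K$-points. Hence $f^{-1}(P)$ and $P$ have the same set of periods, and in particular the same primitive period $n$.

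For part (c), I would combine part (a) with Theorem \ref{thm1}(\ref{item6}) applied to the factors $f^{-1}$, $\phi$, $f$ (each of which extends to $R$), giving $\overline{\phi^f} = \overline{f}^{-1} \circ \overline{\phi} \circ \overline{f}$ on the special fiber, and similarly $\overline{(\phi^f)^k} = \overline{f}^{-1} \circ \overline{\phi}^k \circ \overline{f}$ by Theorem \ref{thm1}(\ref{item7}). Since $\overline{f^{-1}(P)} = \overline{f}^{-1}(\overline{P})$ (Theorem \ref{thm1}(\ref{item5})) and $\overline{f}$ is a bijection on $k$-points, the same argument as in (b) shows the primitive periods of $\overline{f^{-1}(P)}$ under $\overline{\phi^f}$ and of $\overline{P}$ under $\overline{\phi}$ coincide, namely $m$. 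There is no real obstacle; the only thing to be careful about is that we are free to use Theorem \ref{thm1} for $f$ and $f^{-1}$ because part (a) has already supplied the requisite smooth proper model and $R$-extensions.
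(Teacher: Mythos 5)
Your proposal is correct and follows essentially the same route as the paper: take $f^{-1}(\mathcal{X})$ (with the conjugated morphism $f_R^{-1}\circ\phi_R\circ f_R$) as the smooth proper model for part (a), and use the identity $(f^{-1}\circ\phi\circ f)^s = f^{-1}\circ\phi^s\circ f$ together with the fact that $f_R$ is an isomorphism on both generic and special fibers to transfer the primitive periods in parts (b) and (c). Your explicit appeal to Theorem \ref{thm1} in part (c) just spells out what the paper summarizes as ``follows immediately from the diagram.''
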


    \begin{proof}
        Note that the embedding $\mathcal{X}/R \subset \P^{N}_R$ induces an embedding
        $X/K \subset \P^{N}_K$, so the action of $f$ on $X$ is defined.
        We need to show that there exists a smooth projective model
        $\mathcal{Y}/R$ and an $R$-morphism $\psi_R: \mathcal{Y} \to \mathcal{Y}$ such that
        the following diagram commutes and the vertical maps are isomorphisms.
        \begin{equation}\label{eq43}
            \xymatrix{
                \mathcal{Y}/R \ar[r]^{\psi_R} \ar[d]_{f_R} & \mathcal{Y}/R \ar[d]^{f_R}  \\
                \mathcal{X}/R \ar[r]^{\phi_R} & \mathcal{X}/R
            }
        \end{equation}

        We know that $f \in \PGL_{N+1}(R)$ is an automorphism of $\P^{N}_K$ that has good
        reduction with smooth projective model $\P^{N}_R$.  Let $f_R:\P^{N}_R \to
        \P^{N}_R$ be the morphism extending $f$.  Then $f_R$ is an automorphism of
        $\P^{N}_R$ and
        \begin{equation*}
            f_R^{-1}:\mathcal{X}/R \to f_R^{-1}(\mathcal{X}/R)
        \end{equation*}
        is an isomorphism.  Let $\mathcal{Y}/R$ be defined as $f_R^{-1}(\mathcal{X}/R)$, thus the vertical maps are isomorphisms.  Let
        \begin{equation*}
            \psi_R = \phi_R^{f_R} = f^{-1}_R \circ \phi_R \circ f_R.
        \end{equation*}
        Diagram (\ref{eq43}) now clearly commutes.

        To prove (\ref{item12}), the smooth projective model is $\mathcal{Y}/R$ and the morphism is $\psi_R$.
        Since the vertical maps are isomorphisms and
        \begin{equation*}
            (f^{-1} \circ \phi \circ f)^s = f^{-1} \circ \phi^s \circ f \quad \text{for
            all } s \geq 1,
        \end{equation*}
        (\ref{item13}) and (\ref{item14}) follow immediately from the diagram (\ref{eq43}).
    \end{proof}

    To prove Theorem \ref{thm_intro_mrp}, we need a $\pi$-adic version of the Implicit Function
    Theorem that includes an explicit bound on the size of the neighborhood of convergence.
    To do this, we recall without proof a version of Hensel's Lemma for systems of power series.  This version of Hensel's Lemma comes from Greenberg \cite[Chapter 5]{Greenberg}, and we adopt his
    terminology.

    \begin{defn}
        We say $\textbf{f}$ is a \emph{system of formal power series} over $R$ if $\textbf{f}$ is a vector
        where each component is a formal power series with coefficients in $R$ having zero constant term.
    \end{defn}

    \begin{prop}[Hensel's Lemma] \label{thm2}
        Let
        \begin{equation*}
            F_1,\ldots,F_N \in R[x_1,\ldots,x_N] \quad \text{and} \quad
            a_1,\ldots,a_N \in R
        \end{equation*}
        with
        \begin{equation*}
            \abs{F_i(a_1,\ldots,a_N)}_v < \abs{\det\left(\left(\frac{\partial F_i}
            {\partial x_j}\right)_{i,j}(a_1,\ldots,a_N)\right)}_v^2 \leq 1
        \end{equation*}
        for $1 \leq i \leq N$.  Then there exist $b_1,\ldots,b_N \in R$ with
        \begin{equation*}
            F_i(b_1,\ldots,b_N)=0 \text{ and } \abs{b_i-a_i}_v <\abs{\det\left(\left(\frac{\partial F_i}
            {\partial x_j}\right)_{i,j}(a_1,\ldots,a_N)\right)}_v
        \end{equation*}
        for $1 \leq i \leq N$.  Furthermore, the system of power series taking $(a_1,\ldots,a_N)$ to the solution
        $(b_1,\ldots,b_N)$ is defined over $R$.
    \end{prop}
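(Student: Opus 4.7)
The plan is to prove Hensel's Lemma by a $\pi$-adic Newton iteration with quadratic convergence. Write $J = \left(\partial F_i/\partial x_j\right)_{i,j}$ for the Jacobian matrix, set $c = \max_i |F_i(a)|_v$, and set $d = |\det J(a)|_v$, so the hypothesis reads $c < d^2 \leq 1$. The central ingredient is the multivariate Taylor expansion
\[
F_i(a+h) = F_i(a) + \sum_j \frac{\partial F_i}{\partial x_j}(a)\, h_j + Q_i(a,h),
\]
where $Q_i$ is a polynomial in $h_1,\ldots,h_N$ with $R$-coefficients in which every monomial has total $h$-degree at least two; in particular, $|Q_i(a,h)|_v \leq \bigl(\max_j |h_j|_v\bigr)^2$ whenever $h \in R^N$.

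First I define the Newton iteration by $a^{(0)} = a$ and $a^{(k+1)} = a^{(k)} + h^{(k)}$, where $h^{(k)} = -J(a^{(k)})^{-1} F(a^{(k)})$. Because $J^{-1} = \det(J)^{-1}\operatorname{adj}(J)$ and the adjugate has entries in $R$, one has $|h^{(k)}_j|_v \leq \max_i |F_i(a^{(k)})|_v / |\det J(a^{(k)})|_v$. By induction on $k$ I would then establish the two invariants $|\det J(a^{(k)})|_v = d$ and $\max_i |F_i(a^{(k)})|_v \leq d^2(c/d^2)^{2^k}$. For the first, $\det J$ is itself a polynomial over $R$, so $|\det J(a^{(k+1)}) - \det J(a^{(k)})|_v \leq \max_j |h^{(k)}_j|_v < d$, and the ultrametric inequality preserves the Jacobian norm. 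For the second, plugging $h^{(k)}$ into the Taylor expansion cancels the linear terms by construction, leaving $F_i(a^{(k+1)}) = Q_i(a^{(k)}, h^{(k)})$, and hence $|F_i(a^{(k+1)})|_v \leq \bigl(\max_j |h^{(k)}_j|_v\bigr)^2$; substituting the inductive bound yields the required squaring of the ``defect'' $c/d^2 < 1$.

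Quadratic convergence of the $h^{(k)}$ to zero plus completeness of $R$ produces a limit $b \in R^N$ with $F(b) = 0$, and the distance estimate $|b_i - a_i|_v \leq \max_j |h^{(0)}_j|_v \leq c/d < d$ follows from a telescoping sum and the ultrametric inequality. The main technical obstacle is the simultaneous bookkeeping in the inductive step: both invariants must survive each Newton step, and this is exactly where the quantitative hypothesis $c < d^2$ (rather than merely $c < d$) is forced, since one needs $|h^{(k)}|_v < d$ to preserve the Jacobian norm and $|F(a^{(k)})|_v/d^2 < 1$ to drive the defect to zero. For the concluding assertion that the map $(a_1,\ldots,a_N) \mapsto (b_1,\ldots,b_N)$ is given by a system of formal power series defined over $R$, I would redo the recursion symbolically, viewing $\det J(a)^{-1}$ as formally invertible in $R[[a]]$ once the hypothesis guarantees its unit valuation; iterating yields a convergent $R$-valued formal power series expression for the implicit solution.
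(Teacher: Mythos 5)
The paper itself offers no proof of this proposition --- it is quoted without proof from Greenberg --- so there is no in-paper argument to compare against; your Newton iteration is the standard route to such quantitative Hensel lemmas, and the existence half of your write-up is correct. With $d=\abs{\det J(a)}_v$ and $c=\max_i\abs{F_i(a)}_v$, the two invariants $\abs{\det J(a^{(k)})}_v=d$ and $\max_i\abs{F_i(a^{(k)})}_v\le d^2(c/d^2)^{2^k}$ do propagate exactly as you say: since $J^{-1}=\det(J)^{-1}\operatorname{adj}(J)$ and the adjugate has entries in $R$, each $h^{(k)}$ has entries of absolute value at most $d(c/d^2)^{2^k}<d\le 1$, so the iterates stay in $R^N$; the ultrametric inequality then preserves $\abs{\det J}_v$; and the Taylor remainder bound is legitimate because the degree-two-and-higher Taylor coefficients of a polynomial over $R$ at a point of $R^N$ lie in $R$ (no factorials intervene). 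Completeness of $R$ yields $b\in R^N$ with $F(b)=0$ and $\abs{b_i-a_i}_v\le c/d<d$, which is the asserted estimate.

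The genuine gap is in the final clause, which is the part the paper actually uses in proving Proposition \ref{thm3}. Your justification --- viewing $\det J(a)^{-1}$ as ``formally invertible in $R[[a]]$ once the hypothesis guarantees its unit valuation'' --- is not available: the hypothesis only gives $0<\abs{\det J(a)}_v\le 1$, and when $\abs{\det J(a)}_v<1$ the determinant lies in $\m$, hence is not a unit of $R$ and no power series with that constant term is invertible over $R$. The case $\abs{\det J(a)}_v=1$ is just the classical Hensel lemma; the entire point of the refined hypothesis $\abs{F_i(a)}_v<\abs{\det J(a)}_v^2$ is to allow a non-unit Jacobian determinant, and then the symbolic Newton recursion produces coefficients with denominators that are powers of $\det J(a)$, whose integrality is precisely what the ``defined over $R$'' clause asserts. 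A real argument is needed here; one standard route is the substitution $b=a+\det J(a)\,z$: multiplying $F(a+\det J(a)z)$ by $\operatorname{adj}(J(a))$ and dividing by $\det J(a)^2$ gives a system with coefficients in $R$ whose linear part at $z=0$ is the identity and whose constant term lies in $\m$ exactly because $c<d^2$, so the ordinary formal Hensel lemma applies and the solution is expressed by power series over $R$ (with the initial data carried along as parameters). As written, your last paragraph asserts rather than establishes this, and the specific mechanism you invoke fails in the only case where the statement is not classical.
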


    Recall that a function $\textbf{F}:\A^{N+M}(K) \to \A^{M}(K)$
    is \emph{smooth} at a point $P \in \A^{N+M}(K)$ if its Jacobian matrix has rank $M$ at $P$.

    \begin{prop}[$\pi$-adic Implicit Function Theorem] \label{thm3}
        Let
        \begin{equation*}
            \textbf{F}=[F_1,\ldots,F_M]:\A^{N+M}(K) \to \A^{M}(K)
        \end{equation*}
        be a smooth function with each $F_i$ defined over $R$.
        Label the coordinates of $\A^{N+M}$ as $(x_1,\ldots,x_N,y_1,\ldots y_M)$.
        Let $(\textbf{a}_0,\textbf{b}_0)$ be a point in $\A^{N+M}(K)$ such that
        \begin{enumerate}
            \item $F(\textbf{a}_0,\textbf{b}_0)=0$,
            \item the minor
                \begin{equation*}
                    \mathop{\left(\frac{\partial F_i}{\partial y_j}\right)_{i=1\ldots M}}_{\hspace*{32pt} j=1\ldots M}
                    (\textbf{a}_0,\textbf{b}_0)
                \end{equation*}
                is full rank, and
            \item
                \begin{equation*}
                    \abs{F_i(\textbf{a}_0,\textbf{b}_0)}_v < \delta^2
                    \quad \text{for } 1 \leq i \leq M
                \end{equation*}
        \end{enumerate}
        with
        \begin{equation*}
            \delta =\abs{\det\left(\mathop{\left(\frac{\partial F_i}{\partial y_j}\right)_{i=1\ldots M}}_{\hspace*{32pt} j=1\ldots M}
            (\textbf{a}_0,\textbf{b}_0)\right)}_v \leq 1.
        \end{equation*}
        Let $B_{\epsilon}$ denote the $\pi$-adic open ball of radius $\epsilon$ around the origin.
        Then for all $\textbf{a}$ such that
        \begin{equation*}
            \abs{\textbf{a}-\textbf{a}_0}_v < \delta,
        \end{equation*}
        there exists a unique system over $R$
        \begin{equation*}
            \textbf{g}:B_{\delta^2} \to B_{\delta} \subset \A^{M}
        \end{equation*}
        such that
        \begin{equation*}
            \textbf{g}(\textbf{a}_0)=\textbf{b}_0 \quad \text{and} \quad \textbf{F}(\textbf{a},g(\textbf{a})) = 0.
        \end{equation*}
    \end{prop}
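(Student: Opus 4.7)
The plan is to apply Hensel's Lemma (Proposition \ref{thm2}) parametrically, treating $\textbf{a}$ as a parameter: for each $\textbf{a}$ in a small ball around $\textbf{a}_0$, I solve the system $\textbf{F}(\textbf{a},\textbf{y}) = 0$ in $\textbf{y}$ by applying Hensel's Lemma to the polynomial system $G_i^{\textbf{a}}(\textbf{y}) := F_i(\textbf{a},\textbf{y}) \in R[y_1,\ldots,y_M]$ at the initial guess $\textbf{y} = \textbf{b}_0$. The resulting solution map $\textbf{a} \mapsto \textbf{b}$ will be the desired $\textbf{g}$, and the final clause of Proposition \ref{thm2} will supply that it is a system of power series over $R$.

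The verification of the two Hensel hypotheses comes from Taylor expansion. Since each $F_i$ is a polynomial with coefficients in $R$, expanding $\partial F_i/\partial y_j$ about $(\textbf{a}_0,\textbf{b}_0)$ yields
\begin{equation*}
    \abs{\frac{\partial F_i}{\partial y_j}(\textbf{a},\textbf{b}_0) - \frac{\partial F_i}{\partial y_j}(\textbf{a}_0,\textbf{b}_0)}_v \leq \abs{\textbf{a}-\textbf{a}_0}_v.
\end{equation*}
Consequently, for $\abs{\textbf{a}-\textbf{a}_0}_v < \delta$ the ultrametric inequality forces the Jacobian determinant of $G_i^{\textbf{a}}$ in the variables $y_1,\dots,y_M$ at $\textbf{b}_0$ to still have absolute value $\delta$. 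Similarly, expanding $F_i$ itself about $(\textbf{a}_0,\textbf{b}_0)$ and using $F_i(\textbf{a}_0,\textbf{b}_0)=0$ together with the integrality of the coefficients, one obtains $\abs{F_i(\textbf{a},\textbf{b}_0)}_v \leq \abs{\textbf{a}-\textbf{a}_0}_v$, so the Hensel hypothesis $\abs{G_i^{\textbf{a}}(\textbf{b}_0)}_v < \delta^2$ holds once $\textbf{a}$ is taken suitably close to $\textbf{a}_0$.

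With both hypotheses verified, Proposition \ref{thm2} produces for each such $\textbf{a}$ a unique $\textbf{b} = \textbf{b}(\textbf{a}) \in R^M$ with $\textbf{F}(\textbf{a},\textbf{b}(\textbf{a})) = 0$ and $\abs{\textbf{b}(\textbf{a})-\textbf{b}_0}_v < \delta$, and its last sentence guarantees that the assignment $\textbf{a} \mapsto \textbf{b}(\textbf{a})$ is realized by a system of power series $\textbf{g}$ over $R$, which is the sought-after implicit function. Uniqueness of $\textbf{g}$ follows from the uniqueness clause of Hensel combined with the bound $\abs{\textbf{g}(\textbf{a})-\textbf{b}_0}_v < \delta$. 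The main obstacle is the $\pi$-adic bookkeeping: one must choose the radius of the parameter ball carefully so that, uniformly in $\textbf{a}$, both the smallness condition on $F_i(\textbf{a},\textbf{b}_0)$ and the full-rank condition on the Jacobian minor in the $\textbf{y}$-variables survive simultaneously, rather than holding only at the center point $\textbf{a}=\textbf{a}_0$ where the conditions are assumed.
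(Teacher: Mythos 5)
Your proposal is correct and follows essentially the same route as the paper: translate the problem to a parametric application of Hensel's Lemma (Proposition \ref{thm2}) with $\textbf{a}$ as parameter and $\textbf{b}_0$ as initial approximation, use integrality of coefficients and $F(\textbf{a}_0,\textbf{b}_0)=0$ to keep $\abs{F_i(\textbf{a},\textbf{b}_0)}_v<\delta^2$ on the ball of radius $\delta^2$, and invoke the final clause of Proposition \ref{thm2} for the power-series system $\textbf{g}$ over $R$ and its uniqueness. Your explicit check that the Jacobian minor's determinant retains absolute value $\delta$ for $\abs{\textbf{a}-\textbf{a}_0}_v<\delta$ is a detail the paper leaves implicit, but it does not change the argument.
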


    \begin{proof}
        By assumption, we have that
        \begin{equation*}
            \abs{F_i(\textbf{a}_0,\textbf{b}_0)}_v < \delta^2
            \quad \text{for } 1 \leq i \leq M.
        \end{equation*}
        After a translation, we may assume $\textbf{a}_0=(0,\ldots,0)$.

        For any $\textbf{a}$ and $i$, $1 \leq i \leq M$, $F_i(\textbf{a},\textbf{b}_0)$ has a zero constant
        term in $x_1,\ldots,x_N$; so for
        \begin{equation*}
            \abs{a_i}_v < \delta^2 \quad \text{for } 1 \leq i \leq N
        \end{equation*}
         we have
        \begin{equation*}
            \abs{F_i(a_1,\ldots,a_N,\textbf{b}_0)}_v < \delta^2
            \quad \text{for } 1 \leq i \leq M.
        \end{equation*}
        We may apply Hensel's Lemma (Proposition \ref{thm2}), which concludes that there exists a
        system of power series $\textbf{g}$ defined over $R$ which takes a point
        $(a_1,\ldots,a_N)$ and sends it to the unique point
        $\textbf{g}(a_1,\ldots,a_N)=(b_1,\ldots,b_M)$, satisfying
        \begin{enumerate}
            \item $b_1,\ldots,b_M \in R$,
            \item $\textbf{F}(a_1,\ldots,a_N,b_1,\ldots,b_M)=0$, and
            \item $\abs{b_i-a_i}_v < \delta \quad \text{for } 1 \leq i \leq M$.
        \end{enumerate}
        Note that since $\abs{a_i}_v < \delta^2 \quad \text{for } 1 \leq i \leq N$ and
        $\abs{b_i-a_i}_v < \delta \quad \text{for } 1 \leq i \leq M$,
        we have
        \begin{equation*}
            \textbf{g}:B_{\delta^{2}} \to B_{\delta}.
        \end{equation*}
    \end{proof}

\subsection{Proof of Theorem \ref{thm_intro_mrp}}
    Let $\mathcal{X}/R$ be a smooth projective $R$-scheme of dimension $d$ whose
    generic fiber is a non-singular irreducible projective variety $X/K$.  Let
    $\phi_R:\mathcal{X}/R \to \mathcal{X}/R$ be an $R$-morphism and denote
    the restrictions to the generic fiber and the special fiber as $\phi$ and $\overline{\phi}$, respectively.
    Let $P_R \in \mathcal{X}(R)$ be a periodic point of primitive period $n$ for $\phi_R$ and let
    $P = P_R(\Spec K)$ and $\overline{P} = P_R(\Spec k)$ with $\overline{P}$ of primitive period $m$ for $\overline{\phi}$.  There are three main steps in analyzing the primitive period of the reduction of a
    periodic point.
    \begin{itemize}
        \item  Use good reduction to show that there is an open
            $\pi$-adic neighborhood $\mathcal{U}$ and a system of functions $\textbf{f}$ regular
            on $\mathcal{U}$ and defined over $K$ such that $\phi_R$ is represented by
            $\textbf{f}$ as a regular map on $\mathcal{U}$
            and all of the iterates of $P_R$ by $\phi_R$ are contained in $\mathcal{U}$.
        \item Show that $\textbf{f}$ has a local power series representation on
            $\mathcal{U}$ with coefficients in $R$.
        \item Noticing that $\phi$ acts as the cyclic group of order $n$ on the scheme theoretic union $\mathcal{V}$ of the finite set of points $\{P_R,\phi_R(P_R),\ldots,\phi^{n-1}_R(P_R)\}$, iterate a local power series representation of $\textbf{f}$ to obtain information about $n$.
    \end{itemize}
    Recall that we know $m \mid n$ from Corollary \ref{cor1}.  Replacing $\phi_R$ by $\phi_R^{m}$ and $n$ by
    $n/m$, we may assume that $m=1$.  We resolve the first two steps in the following lemma.
    \begin{lem} \label{lem17}
        Let $\mathcal{X}/R \subseteq \P^{N}_R$ be a smooth projective model of
        $X/K$ a non-singular irreducible projective
        variety of dimension $d$.
        Let $\phi_R:\mathcal{X}/R \to \mathcal{X}/R$ be an $R$-morphism and
        let $P_R \in \mathcal{X}(R)$ be a periodic point of primitive period $n$ for $\phi_R$ such that $\overline{P}$ is fixed by $\overline{\phi}$.  Let $\mathcal{V} \subset \mathcal{X}$ be the scheme theoretic union of $\{P_R,\phi_R(P_R),\ldots,\phi^{n-1}(P_R)\}$.
        Then
        \begin{enumerate}
            \item  there exists a $g \in \PGL_{N+1}(R)$ such that
                \begin{equation*}
                    g^{-1}( \{P_R,\phi_R(P_R),\ldots,\phi^{n-1}(P_R)\}) \subset (\A^{N}_0)_R
                \end{equation*}
                and
                \begin{equation*}
                     g^{-1}(P_R) = [1,0,\ldots,0].
                \end{equation*}
            \item there exists a $\pi$-adic neighborhood $\mathcal{U}$ and regular maps $\textbf{f}=[f_i]$ defined over $R$ on $\mathcal{U}$ such that
                \begin{enumerate}
                    \item $\phi_R(\rho)=\textbf{f}(\rho)$ for
                            all $\rho \in \mathcal{U}$ and all of the iterates of $P_R$ by $\phi_R$ are contained in $\mathcal{U}$.
                    \item each $\textbf{f}_i$ has a power series representation over $R$ on $\mathcal{U} \cap (\A^{N}_0)_R$ that converges for values in $\m$ and that we can iterate at $P_R$.
                    \item the local system of parameters for $\O_{\mathcal{V},P_R}$ is a subset of the local system of parameters for $\O_{\mathcal{X},P_R}$.  We can write $\phi_R |_{\mathcal{V}} = [f'_i]$ for regular maps $\textbf{f}\;'=[f'_i]$ defined over $R$, where each $f'_i$ has a local power series representation over $R$ on $\mathcal{V}$ in the local system of parameters for $\O_{\mathcal{V},P_R}$ that converges for values in $\m$ and that we can iterate at $P_R$.
                \end{enumerate}
        \end{enumerate}
    \end{lem}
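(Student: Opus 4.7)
The plan is to handle the three claims in sequence, exploiting the reduction $m=1$ that has already been noted and the fact that all iterates $\phi_R^i(P_R)$ therefore have reduction $\overline{P}$ by Theorem \ref{thm1}.

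For (a), the entire orbit $\{P_R, \phi_R(P_R),\ldots,\phi_R^{n-1}(P_R)\}$ has image in the special fiber equal to the single point $\overline{P}$, so Lemma \ref{lem1} applies and produces an $h \in \PGL_{N+1}(R)$ sending the orbit into $(\A^N_0)_R$ (the case $N=1$ is trivial since $\overline{P}$ already lies in one of the two standard affine patches of $\P^1$, which then contains the whole tube). A translation of $\A^N_0$ extends to an element of $\PGL_{N+1}(R)$, so composing $h$ with a translation that sends $h(P_R)$ to the origin $[1,0,\ldots,0]$ gives a map whose inverse is the required $g$. Proposition \ref{prop30} guarantees that conjugating $\phi_R$ by $g$ preserves the smooth projective model and all primitive periods, so I can assume from now on that $P_R=[1,0,\ldots,0]$ and that the whole orbit sits in $(\A^N_0)_R$.

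For (b), I take $\mathcal{U}$ to be the $\pi$-adic tube of $\overline{P}$, namely the set of $R$-points of $\mathcal{X}$ reducing to $\overline{P}$. All orbit iterates lie in $\mathcal{U}$, and since $\overline{\phi}(\overline{P})=\overline{P}$, $\phi_R$ maps $\mathcal{U}$ into itself. Because $\overline{\phi}(\overline{P}) \in \A^N_0 \otimes k$, a neighborhood of $P_R$ is carried into $(\A^N_0)_R$, so on it $\phi_R$ is given by regular $R$-functions $\mathbf{f}=[f_1,\ldots,f_N]$. Smoothness of $\mathcal{X}/R$ of relative dimension $d$ at $P_R$ means that, after permuting coordinates, $\mathcal{X}$ is cut out near $P_R$ by $N-d$ equations $G_1,\ldots,G_{N-d} \in R[x_1,\ldots,x_N]$ whose Jacobian in $x_{d+1},\ldots,x_N$ has unit determinant at the origin. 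Applying the $\pi$-adic Implicit Function Theorem (Proposition \ref{thm3}) gives $R$-power series expressing $x_{d+1},\ldots,x_N$ in terms of the local parameters $t_i = x_i|_{\mathcal{X}}$, $i=1,\ldots,d$, convergent on an explicit $\pi$-adic ball. Substituting these into $\mathbf{f}$ yields the local power series representation over $R$; shrinking $\mathcal{U}$ if necessary so that it lies in this ball, the representation converges for values in $\m$ and, because $\phi_R(\mathcal{U}) \subseteq \mathcal{U}$, can be iterated at $P_R$.

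For (c), the subscheme $\mathcal{V}$ is $\phi_R$-stable by construction (it is permuted cyclically), and $\O_{\mathcal{V},P_R} = \O_{\mathcal{X},P_R}/I_{\mathcal{V}}$ has cotangent space at $\overline{P}$ that is a quotient of the cotangent space of $\mathcal{X}$. I choose the parameters $t_1,\ldots,t_d$ from (b) so that $t_1,\ldots,t_{d'}$ descend to a minimal generating set of the maximal ideal of $\O_{\mathcal{V},P_R}$, i.e., a local system of parameters of $\mathcal{V}$ that is literally a subset of the system of parameters of $\mathcal{X}$. Reducing the power series representation of $\mathbf{f}$ modulo $I_{\mathcal{V}}$ gives $\mathbf{f}'=[f'_i]$ with the same convergence and iterability properties.

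The main obstacle I anticipate is the convergence bookkeeping in step (b): one must arrange that the tube $\mathcal{U}$ is simultaneously contained in the ball of convergence supplied by Proposition \ref{thm3}, stable under $\phi_R$, and large enough to contain the full orbit, so that the local power series genuinely admits iteration at $P_R$. The smoothness hypothesis and the explicit radius $\delta^2$ from Proposition \ref{thm3} make this possible, but keeping track of the various radii is the delicate part.
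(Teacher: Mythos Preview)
Your proposal is correct and follows essentially the same route as the paper: Lemma~\ref{lem1} plus Proposition~\ref{prop30} for part~(a), smoothness plus the $\pi$-adic Implicit Function Theorem (Proposition~\ref{thm3}) to parametrize $\mathcal{X}$ locally by $t_1,\ldots,t_d$, an explicit check that the dehomogenized components $f_i=\Phi_i/\Phi_0$ have $R$-integral power series expansions (your observation ``$\overline{\phi}(\overline{P})\in\A^N_0\otimes k$'' is exactly what forces $\Phi_0(P_R)\in R^\ast$), and finally a choice of parameters adapted to $\mathcal{V}$ followed by projection.

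The obstacle you anticipate in fact dissolves: smoothness of $\mathcal{X}/R$ means the relevant $(N-d)\times(N-d)$ Jacobian minor is a \emph{unit} in $R$, so $\delta=1$ in Proposition~\ref{thm3} and the ball of convergence is all of $\m$, which is precisely the tube $\mathcal{U}$; no shrinking or radius bookkeeping is required.
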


    \begin{proof}
        The embedding $\mathcal{X} \subseteq \P^{N}_R$ induces an embedding
        $X \subset \P^{N}_K$.  Let $[t_0,\ldots,t_N]$ be the coordinates of
        $\P^{N}_R$.
        Let
        \begin{equation*}
            \mathcal{P} = \{P_R, \phi_R(P_R), \ldots, \phi^{n-1}_R(P_R)\}.
        \end{equation*}
        This is a finite set of points in $\mathcal{X}(R)$ whose images on the special fiber
        are a single point $\overline{P}$.
        By Lemma \ref{lem1}, we can find a $g \in \PGL_{N+1}(R)$ such that
        \begin{equation*}
            g^{-1}(\mathcal{P}) \subset (\A^{N}_0)_R \quad \text{and} \quad
            g^{-1}(P_R(\Spec{R})) = [1,0,\ldots,0].
        \end{equation*}
        Replace $\phi_R$ by $\phi_R^g = g^{-1} \circ \phi_R \circ g$ and
        $P_R$ by $g^{-1}(P_R)$.
        We are now working over $g^{-1}(\mathcal{X})$, so replace $\mathcal{X}$ by
        $g^{-1}(\mathcal{X})$ and hence $\mathcal{V}$ by $g^{-1}(\mathcal{V})$.  Note that $\mathcal{V} \subset (\A_0^{N})_R$.
        Dehomogenize with respect to $t_0$ and label the affine coordinates
        \begin{equation*}
            T_i = \frac{t_i}{t_0} \quad \text{for } 1 \leq i \leq N.
        \end{equation*}
        Let $\mathcal{Y}/R = (\mathcal{X}/R) \cap (\A^{N}_0)_R$.  Note that
        $\mathcal{Y}$ is a smooth irreducible affine scheme and, consequently, integral.
        We know that $\mathcal{V} \subset \mathcal{Y}$ is a closed subscheme.  In particular, we have that
        \begin{equation*}
            \m_{\mathcal{V},P_R}/\m_{\mathcal{V},P_R}^2 \subset \m_{\mathcal{Y},P_R}/\m_{\mathcal{Y},P_R}^2
        \end{equation*}
        as vector spaces.  Hence, we can choose a basis for $\m_{\mathcal{Y},P_R}/\m_{\mathcal{Y},P_R}^2$ that contains a basis for $\m_{\mathcal{V},P_R}/\m_{\mathcal{V},P_R}^2$ as a subset.  Since we are working in $(\A_0^N)_R$, this change of basis comes from an element of $GL_{N}(R)$ and, hence, an element of $\PGL_{N+1}(R)$ acting on $\mathcal{X}$.  Continue to label the coordinates as $\{T_1,\ldots,T_N\}$.
        Let $g_1,\ldots,g_s \in R[T_1,\ldots,T_N]$ be equations that define $\mathcal{Y}$,
        i.e.,
        \begin{equation*}
            \mathcal{Y} = \Spec{R[T_1,\ldots,T_N]/(g_1,\ldots,g_s)}.
        \end{equation*}
        Identifying the points $\mathcal{Y}(R) = \Hom(\Spec{R}, \mathcal{Y})$ with the solutions to
        $g_1=\cdots=g_s=0$ in $\A^{N}_R$, we have
        the resulting map and periodic point on $\mathcal{Y}$:
        \begin{equation*}
            \phi_R: \mathcal{Y} \to \mathcal{X} \quad \text{ with } \quad
                \phi_R:\mathcal{P}_R \to \mathcal{P}_R, \quad \text{and} \quad P_R =
                (0,\ldots,0).
        \end{equation*}
        Since the scheme $\mathcal{Y}$ is smooth, the determinants of the $(N-d) \times (N-d)$ minors of the Jacobian matrix
        \begin{equation*}
            \mathop{\left(\frac{\partial g_i}{\partial T_j}\right)_{i=1\ldots s}}_{\hspace*{38pt} j=1\ldots
            N}
        \end{equation*}
        generate the unit ideal in $R[T_1,\ldots,T_N]/(g_1,\ldots,g_s)$ (see for
        example \cite[IV.2]{silverman2}).  In particular, since $R$ is a discrete valuation ring,
        $P_R$ a non-singular point of $\mathcal{Y}$ implies that the determinant of one of
        the $(N-d) \times (N-d)$ minors of the matrix
        \begin{equation*}
            \mathop{\left(\frac{\partial g_i}{\partial T_j}(P_R)\right)_{i=1\ldots s}}_{\hspace*{60pt} j=1\ldots N}
        \end{equation*}
        is a unit in $R$.  Relabeling, let
        \begin{equation*}
            \mathop{\left(\frac{\partial g_i}{\partial T_j}(P_R)\right)_{i=1\ldots N-d}}_{\hspace*{56pt} j=d+1\ldots
            N}
        \end{equation*}
        be such a minor and, hence,
        \begin{equation*}
            \delta = \abs{\det\left(\mathop{\left(\frac{\partial g_i}{\partial T_j}(P_R)
                    \right)_{i=1\ldots N-d}}_{\hspace*{56pt} j=d+1\ldots N}\right)}_v =1.
        \end{equation*}
        Then the restrictions $T_1,\ldots,T_d$ to $\mathcal{Y}$ of the first $d$ coordinates
        form a system of local parameters of $\mathcal{Y}$ at $P_R$.  Let $\mathcal{Y}^{\prime}$
        be the union of all components that pass through $P_R$ of the scheme defined by
        the equations $g_1=\cdots=g_{N-d}=0$.  Since $\delta \neq 0$, the
        dimension of the tangent space $\Theta^{\prime}$ to
        $\mathcal{Y}^{\prime}$ at $P_R$ is $d$.  Since $\dim_P \mathcal{Y}^{\prime} \geq d$ and $\dim
        \Theta^{\prime} \geq \dim_P \mathcal{Y}^{\prime}$, we have that $\dim \mathcal{Y}^{\prime} = d$
        and $P_R$ is a non-singular point of $\mathcal{Y}^{\prime}$.  The scheme $\mathcal{Y}^{\prime}$ is
        irreducible and reduced, so then $\mathcal{Y}^{\prime} = \mathcal{Y}$.
        We have that $\mathcal{Y}$ is defined locally by $N-d$ equations (i.e.,
        $\mathcal{Y}$ is a local complete intersection)
        and those equations satisfy $\delta =1$.

        Since the ring of formal power series is complete, we apply
        the Implicit Function Theorem (Proposition \ref{thm3}) to deduce that there exists a system of power series $F_1,\ldots,F_{N-d}$ over $R$ in $d$ variables $T_1,\ldots,T_d$ such that
        $F_i(T_1,\ldots,T_d)$ converges for all $\abs{T_j}_v < 1=\delta^2$ and
        \begin{equation} \label{eq3}
            g_i(T_1,\ldots,T_d,F_1(T_1,\ldots,T_d),\ldots,F_{N-d}(T_1,\ldots,T_d))=0
        \end{equation}
        for $1\leq i \leq N-d$.
        Moreover, the coefficients of the power series $F_i$ are uniquely determined by
        (\ref{eq3}).
        Let $\tau: \O_{\mathcal{Y},P_R} \to K[[T]]$ be the uniquely determined map that
        takes each
        function to its Taylor series.  Assuming that $T_1,\ldots,T_d$ are chosen as local
        parameters, the formal power series
        \begin{equation*}
            \tau(T_{d+1}),\ldots,\tau(T_{N})
        \end{equation*}
        also satisfy (\ref{eq3}), and, hence, must coincide with $F_1,\ldots,F_{N-d}$.
        It follows that
        \begin{equation*}
            \tau(T_{d+1}),\ldots,\tau(T_{N})
        \end{equation*}
        converge for
        \begin{equation*}
            \abs{T_i}_v < 1=\delta^2, \quad \text{for } 1 \leq i \leq d.
        \end{equation*}
        We have $P_R=(0,\ldots,0)$ in $\A^{N}_R$ and
        $\overline{\phi_R(P_R)}=(\overline{0},\ldots,\overline{0})$ in
        $\A^{N}_k$.  Hence, we have
        \begin{equation*}
        \overline{\phi_R^{s}(P_R)} = (\overline{0},\ldots,\overline{0})\quad  \text{for all } s \geq 0.
        \end{equation*}
        Equivalently,
        \begin{equation*}
            \phi^s_R(P_R) \in \m \quad \text{for all } s \geq 0.
        \end{equation*}
        So we have that $\mathcal{P}$ is mapped to the affine neighborhood where $\O_{\mathcal{Y},P_R}$ has its maximal ideal generated by $T_1,\ldots,T_d$.  Note that we also have that the generators of the maximal ideal of $\O_{\mathcal{V},P_R}$ is a subset of $\{T_1,\ldots,T_d\}$. Choose a neighborhood $\mathcal{U} \subset \mathcal{Y}$ such that $\phi_R(P_R) \in \mathcal{U}$.

        The morphism $\phi_R$ can be written locally as a vector of homogeneous polynomials of the same degree
        $D$ with coefficients in $R$ and at least one coefficient in $R^{\ast}$.  Denote
        $\overline{\phi}$ the restriction of $\phi_R$ to the special fiber.  Recall that
        $[1,0,\ldots,0]$ is a fixed point of the morphism $\overline{\phi}$; hence
        \begin{equation} \label{eq45}
            \overline{\phi([1,0,\ldots,0])} = \overline{\phi}(\overline{[1,0,\ldots,0]})=\overline{[1,0,\ldots,0]}.
        \end{equation}
        Therefore, there is some normalized representation $[\Phi_0,\ldots,\Phi_N]$ of $\phi_R$ near $[1,0\ldots,0]$
        for which $\overline{\phi} = [\overline{\Phi_0},\ldots,\overline{\Phi_N}]$ is a morphism.
        Label such a representation as
        \begin{equation*}
            \Phi_i = \sum_{\abs{I}=D} \alpha_{i,I} \textbf{T}^I,
        \end{equation*}
        where $I$ is a multi-index.  By (\ref{eq45}), we have
        \begin{equation} \label{eq46}
                \overline{[\alpha_{0,(D,0,\ldots,0)},\ldots,\alpha_{N,(D,0,\ldots,0)}]} =
                \overline{[1,0\ldots,0]}.
        \end{equation}
        We will now check that $\alpha_{i,(D,0,\ldots,0)} \in R^{\ast}$ for some $0 \leq i \leq
        N$.
        Assume that
        \begin{equation*}
            \alpha_{i,(D,0,\ldots,0)} \equiv 0 \mod \pi \quad \text{for }0 \leq i \leq N.
        \end{equation*}
        Since $\overline{\phi}(\overline{[1,0,\ldots,0])}=\overline{[1,0,\ldots,0]}$ and by assumption
        every monomial contains one of $T_1,\ldots,T_N$, we must have
        some monomial in $T_1,\ldots,T_N$ dividing every $\overline{\Phi_i}$ for $0 \leq i \leq N$.
        This contradicts the fact that $\overline{\phi}$ is a morphism represented by
        $[\overline{\Phi_0},\ldots,\overline{\Phi_N}]$ at $\overline{[1,0,\ldots,0]}$.
        Consequently, by (\ref{eq46}), we have
        \begin{align*}
            \alpha_{i,(D,0,\ldots,0)} &\equiv 0 \mod{\pi} \quad \text{for} \quad 1 \leq i \leq N \\
            \alpha_{0,(D,0,\ldots,0)} &\not\equiv 0 \mod{\pi} \quad \text{i.e.,} \quad
            \alpha_{0,(D,0,\ldots,0)}\in R^{\ast}.
        \end{align*}
        On the affine neighborhood $\mathcal{U}$, we have that $T_1,\ldots,T_d$ form a system of local parameters of $\O_{\mathcal{Y},P}$ so
        we can dehomogenize and write
        \begin{equation*}
            \left(\frac{\Phi_1}{\Phi_0}(\rho),\ldots,\frac{\Phi_d}{\Phi_0}(\rho)\right)=\textbf{f}(\rho)=
            (f_1(\rho),\ldots,f_d(\rho)) = \phi(\rho)  \quad \text{for all} \quad \rho \in
            \mathcal{U},
        \end{equation*}
        yielding each $f_i$ as the quotient of polynomials with coefficients in $R$:
        \begin{equation*}
            f_i = \frac{\sum_{\abs{I} \leq D}\alpha_{i,I}\textbf{T}^I}
            {\sum_{\abs{I} \leq D}\alpha_{0,I}\textbf{T}^I} \quad \text{for} \quad 1
            \leq i \leq d.
        \end{equation*}
        Since $\alpha_{0,(D,0,\ldots,0)} \in R^{\ast}$, we can divide the numerators and denominators by
        $\alpha_{0,(D,0,\ldots,0)}^{-1}$ to get
        \begin{equation*}
            f_i = \frac{\sum_{\abs{I} \leq D}\alpha^{\prime}_{i,I}\textbf{T}^I}
            {1 + \sum_{\abs{I} \leq D}\alpha^{\prime}_{0,I}\textbf{T}^I}.
        \end{equation*}
        The denominators are series (in fact polynomials) with coefficients in $R$ and,
        hence, converge $\pi$-adically for $T_i \in \m$ and $0 \leq i \leq d$, so we can find a multiplicative inverse.  We can write
        \begin{equation*}
            f_i = \omega_i + \Lambda_i \textbf{T} + \cdots
        \end{equation*}
        where $\omega_i$ and $\Lambda_i$ are vectors.  These power series converge for values in
        $\m$ and have coefficients in $R$, and every iterate of $P_R$ has coordinates
        in $\m$.  Hence, we can iterate $\textbf{f}$ at $P_R$.

        We now have near $P_R$ on $\mathcal{Y}$ that
        \begin{equation*}
            \phi_R = \textbf{f} = \omega  + d\textbf{f}_P \textbf{T} + (\text{higher degree
            terms}),
        \end{equation*}
        where $d\textbf{f}_P$ is the Jacobian matrix of $\textbf{f}$ at $P_R$.

        To get the representation of $\phi_R |_{\mathcal{V}}$ we project $\textbf{f}$ onto $\O_{\mathcal{V},P_R} \subset \O_{\mathcal{Y},P_R}$ by sending $T_{d'+1} = \cdots= T_d =0$.  This gives us a representation $\textbf{f}\;'$ for $\phi_R|_{\mathcal{V}}$ with the appropriate properties.
    \end{proof}

    \begin{proof}[Proof of Theorem \ref{thm_intro_mrp}]
        Recall that we know $m \mid n$ from Corollary \ref{cor1}.  If $m=n$ we are done; otherwise, replace $\phi_R$ by $\phi_R^{m}$ and $n$ by
        $n/m$, so $m=1$. From Lemma \ref{lem17}, after dehomogenizing, we have $P_R=(0,\ldots,0)$ and on $\mathcal{V}$ we have
        \begin{equation*}
            \phi_R|_{\mathcal{V}} = \textbf{f} = \omega  + d\textbf{f}_P \textbf{T} + (\text{higher degree
            terms}),
        \end{equation*}
        where $d\textbf{f}_P$ is the Jacobian of $\textbf{f}$ at $P_R$, and we can iterate
        $\textbf{f}$ at $P_R$.  The morphism $\phi_R$ acts on $\mathcal{V}$ as a cyclic group of order $n$, thus $(\phi_R^n)_{|\mathcal{V}}$ is the identity map; in other words, $d\phi^n_P$ fixes $T_1,\ldots,T_{d^{\prime}}$.  Let $r_V$ be the order of $d\overline{\phi}_P$ on $V$ and note that $r_V \mid n$ since $\phi_R^n$ fixes $\mathcal{V}$.  Replace $n$ by $n/r_V$, $\phi_R$ by $\phi_R^{r_V}$, and $P_R$ by $\phi_R^{r_V}(P_R)$.  We will continue to write
        \begin{equation*}
            \phi_R|_{\mathcal{V}} = \textbf{f} = \omega  + d\textbf{f}_P \textbf{T} + (\text{higher degree
            terms}).
        \end{equation*}

        If $n=1$ we are done, so assume $n \neq 1$.  We know that $d\overline{\phi}_P$ fixes $T_1,\ldots,T_{d^{\prime}}$.  In other words, $d\textbf{f}_P$ is the identity modulo $\pi$.  Labeling $df_P = I + \pi^cA$ where $I$ is the identity matrix and $A$ has at least one entry in $R^{\ast}$.  If $df_P = I$, then set $c=\infty$.  Iterating modulo $(\omega)^2$ we have
        \begin{equation*}
            \textbf{f}^{\,n}(0) \equiv \left(nI + \sum_{i=1}^n\binom{i}{1}\pi^{c}A + \cdots + \sum_{i=1}^n \binom{i}{n}\pi^{nc}A^n\right)\omega
            \equiv 0 \mod{(\omega)^2}.
        \end{equation*}
        Since $\omega_j$ generates $(\omega)$ for some $1 \leq j \leq d'$ we must have
        \begin{equation*}
            n \equiv 0 \pmod{\pi}.
        \end{equation*}
        Replace $n$ by $n/p$ and $\phi_R$ by $\phi_R^p$.  If $n=1$ we are done; if not we can repeat the above argument to see that
        \begin{equation*}
            n = mr_Vp^e \quad \text{for} \quad e \geq 0.
        \end{equation*}

        Note that $r_V$ is independent of the choice of local parameters and power series representation for $\phi_R$ near $P_R$.

        We have that $d\overline{\phi^m}_P$ restricted to $\mathcal{V}$ is an element of $\GL_{d'}(\F_{N\pi})$; consequently, its multiplicative order is bounded by $((N\pi)^{d'} - 1)$ \cite[Corollary 2]{Darafsheh}.
    \end{proof}

    \begin{rem}
        We can use Theorem \ref{thm_intro_mrp} to bound the primitive period of a periodic point using good reduction information as in \cite[Corollary B]{Silverman10}, but such a result is not stated here since the proof is identical to the one-dimensional case.  Furthermore, these types of bounds, using only information at the primes of good reduction, have been superseded in the one-dimensional case by Benedetto \cite[Main Theorem]{Benedetto2}.
    \end{rem}

    \begin{thm} \label{thm36}
        Let $K$ be a number field and $X \subseteq \P^{N}_K$ a projective variety
        defined over $K$.  Let $\phi:X \to X$ be a morphism defined over $K$ and $P \in X(K)$ a
        periodic point of primitive period $n$.  For a prime of good reduction $\mathfrak{p} \in K$, let $m_{\mathfrak{p}}$ be the primitive period of $\overline{P}$.
        Then there are only finitely many primes $\mathfrak{p} \in K$ where $n \neq m_{\mathfrak{p}}$.
    \end{thm}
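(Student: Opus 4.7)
The plan is to reduce this to the classical fact that two distinct points of $\mathbb{P}^N(K)$ can only coincide modulo finitely many primes of $K$. By Corollary \ref{cor1}, $m_{\mathfrak{p}}$ divides $n$ at every prime of good reduction, so the set of ``bad'' primes we are trying to control is
\begin{equation*}
    S = \{\mathfrak{p} : \mathfrak{p}\text{ of good reduction and } m_{\mathfrak{p}} \text{ is a proper divisor of } n\}.
\end{equation*}
Since $n$ has only finitely many proper divisors $d$, it suffices to show that for each fixed proper divisor $d$ of $n$, the set $S_d$ of primes $\mathfrak{p}$ at which $\overline{\phi^d(P)} = \overline{P}$ is finite, and then observe $S \subseteq \bigcup_{d \mid n,\ d<n} S_d$.

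Fix such a $d$. Since $n$ is the primitive period of $P$, we have $\phi^d(P) \neq P$ in $X(K) \subseteq \mathbb{P}^N(K)$. Write $P = [a_0 : \cdots : a_N]$ and $\phi^d(P) = [b_0 : \cdots : b_N]$ with all $a_i, b_j$ in the ring of integers $A$ of $K$ (clearing denominators). The points being distinct means the $2 \times 2$ Plücker minors $c_{ij} = a_i b_j - a_j b_i \in A$ are not all zero, so at least one $c_{i_0 j_0}$ is a nonzero element of $A$. A nonzero element of $A$ lies in only finitely many prime ideals of $A$. On the other hand, $\overline{\phi^d(P)} = \overline{P}$ in $\mathbb{P}^N(k(\mathfrak{p}))$ forces every $c_{ij}$ to vanish mod $\mathfrak{p}$, in particular $\mathfrak{p} \mid c_{i_0 j_0}$. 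Hence $S_d$ is finite.

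Taking the union over the finitely many proper divisors $d$ of $n$ yields that $S$ is finite, which is the claim. The only subtlety is to ensure the normalization of the coordinates of $P$ and $\phi^d(P)$ is done over $A$ so that the minors make sense integrally; this is routine since $X \subseteq \mathbb{P}^N_K$ and a normalization in the sense of the definition preceding Proposition \ref{prop3} exists after localizing at each $\mathfrak{p}$ of good reduction, and a global integral model for the coordinates is obtained by clearing denominators once and for all. The main (and essentially only) obstacle is the bookkeeping check that the integral representatives needed to speak of ``divisibility by $\mathfrak{p}$'' can be chosen uniformly in $\mathfrak{p}$; this is handled by the Plücker coordinate observation above.
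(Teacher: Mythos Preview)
Your proof is correct and follows essentially the same approach as the paper's: reduce via Corollary~\ref{cor1} to the finitely many proper divisors $s$ of $n$, and for each such $s$ observe that the distinct points $P$ and $\phi^s(P)$ in $\P^N(K)$ can coincide modulo only finitely many primes. The only cosmetic difference is that you detect coincidence via the Pl\"ucker minors $a_ib_j-a_jb_i$, whereas the paper normalizes the first nonzero coordinate to $1$ and looks at the coordinate differences $\phi^s(P)_i-P_i$; your version is arguably tidier since it sidesteps the normalization bookkeeping.
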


    \begin{proof}
        Let $[t_0,\ldots, t_N]$ be coordinates for $\P^{N}_K$.  Denote the $i\tth$ coordinate
        of a point $Q\in \P^{N}_K$ as $Q_i$.  Then we know that, after
        normalizing the points so that the first non-zero coordinate is 1, we have
        \begin{equation*}
            \phi^n(P)_i - P_i =0
        \end{equation*}
        for each coordinate $0 \leq i \leq N$.  We know that there are only
        finitely many divisors of $n \in \Z$, so we have finitely many normalized
        expressions
        \begin{equation*}
            \phi^s(P)_i -P_i
        \end{equation*}
        with $s \mid n$ and $s < n$.  We have
        \begin{equation*}
            \phi^s(P)_i - P_i \neq 0 \quad \text{for } 0 \leq i \leq N
        \end{equation*}
        since $n$ is the primitive period of $P$.  Hence, $\phi^s(P)_i - P_i$ is some value in $K$, which can be factored into finitely many primes.  We can do this for each of the finitely
        many $0 \leq i \leq N$. So there are at most finitely many primes where there is some
        $s \mid n$ with $s < n$ such that $\overline{\phi^{s}(P)} = \overline{P}$.  Hence, there
        are only finitely many primes $\mathfrak{p}$ such that $n \neq m_{\mathfrak{p}}$.
    \end{proof}

\subsection{Proof of Theorem \ref{thm_intro_pe}} \label{sect4}
    We now bound the exponent $e$ in Theorem \ref{thm_intro_mrp}.  We replace $\phi_R$ by $\phi_R^{mr_V}$ and $P_R$ by $\phi_R^{mr_V}(P_R)$.  From Lemma \ref{lem17}, we can reduce to the case of considering a local power series representation of $\phi_R |_{\mathcal{V}}$ at the origin.    We write
    \begin{equation*}
        \phi_R|_{\mathcal{V}} = \textbf{f} = \omega + d\textbf{f}_0\textbf{T} + \text{(higher degree terms)}
    \end{equation*}
    where $d\textbf{f}_0$ is the identity modulo $\pi$.  We will use the notation $x_t(P)$ to denote the $t\tth$ coordinate of a point $P$.  The method of proof is similar to \cite{Pezda}.

    \begin{defn}
        \mbox{}
        \begin{itemize}
            \item Define $F_k(x_1,\ldots,x_n)=\textbf{f}^{\,p^k}(x_1,\ldots,x_n)$.
            \item Define $\textbf{x}_0 = (0,\ldots,0)$ and $\textbf{x}_k = \textbf{f}^{\,p^k}(\textbf{x}_0) =F_k(\textbf{x}_0)= \textbf{f}^{\,p}(\textbf{x}_{k-1}) = F_{k-1}^p(\textbf{x}_0)$.
            \item Define $b_k = \min_{i=1,\ldots, d'}v(x_i(\textbf{x}_k))$.
            \item Write $d\textbf{f}^{\,p^k}_0 = I + \pi^{c_k}A_k$ where $I$ is the identity matrix and $A_k$ has at least one entry in $R^{\ast}$.  If $d\textbf{f}^{\,p^k}_0 = I$, then we set $c_k = \infty$.
            \item Assume that $(0,\ldots,0)$ has primitive period $p^e$.
        \end{itemize}
    \end{defn}

    \begin{lem} \label{lem5}
        For $k=1,\ldots, e-1$ we have
         \begin{align*}
            b_{k+1} &\geq  \min(2b_k,b_k + (p-1)c_k,b_k + v(p)) \\
            c_{k+1} &\geq \min(b_k,c_k + v(p),pc_k).
        \end{align*}
        Furthermore,
        \begin{equation*}
            \min(b_k,c_k) \leq v(p)
        \end{equation*}
        and if $b_k > v(p)$, then $(p-1)c_k \leq v(p)$.
    \end{lem}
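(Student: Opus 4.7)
The plan is to prove the two recursive inequalities by a direct local iteration of the power series $F_k$, and then to establish both ``furthermore'' statements simultaneously by downward induction on $k$ from $e-1$, using the primitivity of the period $p^e$ to close the base case.

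For the recursions, I would write $F_k(\textbf{T}) = \textbf{x}_k + M_k \textbf{T} + Q_k(\textbf{T})$ with $M_k = I + \pi^{c_k} A_k$ and $Q_k(\textbf{T}) = O(\textbf{T}^2)$. Iterating $\textbf{y}_0 = \textbf{0}$, $\textbf{y}_{j+1} = F_k(\textbf{y}_j)$, the bound $v(\textbf{y}_j) \geq b_k$ for $j \geq 1$ makes each quadratic contribution $O(\pi^{2 b_k})$, so $\textbf{x}_{k+1} = \textbf{y}_p = S \textbf{x}_k + O(\pi^{2 b_k})$ where $S = \sum_{i=0}^{p-1} M_k^i$. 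The hockey-stick identity gives $S = \sum_{j=0}^{p-1} \binom{p}{j+1} \pi^{j c_k} A_k^j$, and since $p \mid \binom{p}{j+1}$ for $0 \leq j \leq p-2$, the valuation of $S$ is at least $\min(v(p), (p-1) c_k)$, yielding the bound on $b_{k+1}$. For $c_{k+1}$, apply the chain rule: $M_{k+1} = \prod_{j=0}^{p-1} dF_k(\textbf{y}_j) = \prod_{j=0}^{p-1}(M_k + dQ_k(\textbf{y}_j)) = M_k^p + O(\pi^{b_k})$, and the binomial expansion $M_k^p - I = \sum_{j=1}^p \binom{p}{j} \pi^{j c_k} A_k^j$ together with $p \mid \binom{p}{j}$ for $1 \leq j \leq p-1$ gives the bound on $c_{k+1}$.

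For the ``furthermore'' part, note that the implication ``$b_k > v(p) \Rightarrow (p-1) c_k \leq v(p)$'' implies $\min(b_k, c_k) \leq v(p)$, so it suffices to prove this implication, which I do by downward induction on $k$. The base case $k = e-1$ uses $\textbf{x}_e = \textbf{0}$: assuming $b_{e-1} > v(p)$ and $(p-1) c_{e-1} > v(p)$, in the expansion $\textbf{x}_e = S \textbf{x}_{e-1} + O(\pi^{2 b_{e-1}})$ the term $p \textbf{x}_{e-1}$ has valuation exactly $v(p) + b_{e-1}$, while every other summand of $S \textbf{x}_{e-1}$ has strictly greater valuation (using $(p-1) c_{e-1} > v(p)$ and $c_{e-1} \geq 1$) and the quadratic error satisfies $2 b_{e-1} > v(p) + b_{e-1}$; therefore $v(\textbf{x}_e) = v(p) + b_{e-1}$ is finite, contradicting $\textbf{x}_e = \textbf{0}$. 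For the inductive step, assume $\min(b_{k+1}, c_{k+1}) \leq v(p)$ and suppose $b_k > v(p)$, $(p-1) c_k > v(p)$. Each of $2 b_k$, $b_k + (p-1) c_k$, $b_k + v(p)$ strictly exceeds $v(p)$, so the first recursion gives $b_{k+1} > v(p)$; similarly each of $b_k$, $c_k + v(p)$, $p c_k = (p-1) c_k + c_k$ exceeds $v(p)$, so the second recursion gives $c_{k+1} > v(p)$, contradicting the inductive hypothesis.

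The main obstacle is the base case: one must verify that neither the higher-order correction $O(\pi^{2 b_{e-1}})$ nor the non-leading terms of $S \textbf{x}_{e-1}$ can cancel the dominant term $p \textbf{x}_{e-1}$. The two hypotheses $b_{e-1} > v(p)$ and $(p-1) c_{e-1} > v(p)$ are precisely what is needed to force strict inequalities among the valuations of all the competing terms, ruling out cancellation and forcing $v(\textbf{x}_e) < \infty$.
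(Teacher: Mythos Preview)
Your proposal is correct and follows essentially the same approach as the paper. Both arguments expand $\textbf{x}_{k+1}\equiv S\,\textbf{x}_k\pmod{\pi^{2b_k}}$ with $S=\sum_{i=0}^{p-1}M_k^i$ and use the hockey-stick identity to see that the intermediate binomial sums are divisible by $p$, and both use the chain rule plus the binomial expansion of $M_k^p$ for $c_{k+1}$. For the ``furthermore'' part, the paper argues at $k=e-1$ directly from $\textbf{x}_e=0$ and then remarks that if the assertion fails for some $k$ the recursions force $\min(b_{e-1},c_{e-1})>v(p)$; your downward induction is simply the contrapositive of this forward propagation, and in fact is slightly cleaner in that it yields the implication $b_k>v(p)\Rightarrow(p-1)c_k\le v(p)$ for every $k$, whereas the paper's terse phrasing only makes it explicit at $k=e-1$ (which is all that is needed later).
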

    \begin{proof}
        We have
        \begin{align}
           \textbf{x}_{k+1} &\equiv F_k^p(\textbf{x}_0) \equiv  \left(I + (I + \pi^{c_k}A_k) +  \cdots + (I + \pi^{c_k}A_k)^{p-1}\right)\textbf{x}_k \pmod{\pi^{2b_k}}  \notag \\
           &\equiv \Bigg(pI + \sum_{i=1}^{p-1} \binom{i}{1}\pi^{c_k}A_k + \cdots \label{eq1}\\
           &\qquad \cdots + \sum_{i=1}^{p-1} \binom{i}{p-1}\pi^{(p-1)c_k}A_k^{p-1}\Bigg)\textbf{x}_k \pmod{\pi^{2b_k}}. \notag
        \end{align}
        Using the identity
        \begin{equation}\label{eq2}
            \sum_{j=1}^n \binom{j}{k} = \frac{n+1}{n-k+1}\binom{n}{k}
        \end{equation}
        we see that each intermediary sum in (\ref{eq1}) is divisible by $p$.  So we have
        \begin{equation*}
            b_{k+1} \geq \min(2b_k,b_k + (p-1)c_k,b_k + v(p)).
        \end{equation*}

        By the chain rule, we have
        \begin{align*}
            df^{k+1}_0 &\equiv (df^k_0)^p \equiv  (I + \pi^{c_k}A_k)^p \pmod{\pi^{b_k}}\\
            &\equiv I + \binom{p}{1}\pi^{c_k}A_k + \binom{p}{2}\pi^{2c_k}A_k^2 + \cdots + \pi^{pc_k}A_k^{p} \pmod{\pi^{b_k}}.
        \end{align*}
        Therefore,
        \begin{equation*}
            c_{k+1} \geq \min(b_k,c_k + v(p), pc_k).
        \end{equation*}

        To prove the final statement consider (\ref{eq1}) with $k=e-1$:
        \begin{equation*}
            \textbf{x}_{e} \equiv \left(pI +  \cdots
            + \pi^{(p-1)c_{e-1}}A_{e-1}^{p-1}\right)\textbf{x}_{e-1} \pmod{\pi^{2b_{e-1}}}.
        \end{equation*}
        By (\ref{eq2}) each intermediary term is divisible by $p\pi^{c_{e-1}}$.  If $b_{e-1} > v(p)$, then we must have $(p-1)c_{e-1} \leq v(p)$.  If the assertion fails for some $k$, then $\min(b_{e-1},c_{e-1}) > v(p)$.
    \end{proof}

    \begin{lem} \label{lem2}
        For $k = 1,\ldots, e-1$ and $p \neq 2$ we have
        \begin{align*}
            b_k &\geq 2^k \quad \text{for } b_k \leq v(p)\\
            c_k &\geq 2^{k-1}
        \end{align*}
        and for $p=2$ we have
        \begin{align*}
            b_k &\geq G_{k+1}\quad \text{for } b_k \leq v(p)\\
            c_k &\geq G_{k}
        \end{align*}
        where $\alpha = \frac{1+\sqrt{5}}{2}$ and $G_k = \frac{\alpha^{k} - (-1/\alpha)^k}{\sqrt{5}}$ is the $k\tth$ Fibonacci number.
    \end{lem}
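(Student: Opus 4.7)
The plan is to induct on $k$ using the recurrences of Lemma \ref{lem5}, with a case analysis on whether $b_k \leq v(p)$ or $b_k > v(p)$. The base case $k=1$ must be handled directly because Lemma \ref{lem5} cannot be applied at $k=0$: since $\textbf{x}_0 = 0$, the formula from its derivation would give $b_1 = \infty$, which is false. The inductive step then uses the recurrences packaged in the convenient form
\begin{equation*}
b_{k+1} \geq b_k + \min\bigl(b_k,\,(p-1)c_k,\,v(p)\bigr), \qquad c_{k+1} \geq \min\bigl(b_k,\,c_k + v(p),\,p c_k\bigr).
\end{equation*}

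For the base case I would Taylor-expand $\textbf{x}_1 = \textbf{f}^{\,p}(0)$ around $0$ exactly as in the proof of Lemma \ref{lem5}, with $\omega := \textbf{f}(0)$ playing the role that $\textbf{x}_k$ plays there. Since $\overline{\textbf{f}}$ fixes $\overline{0}$ we have $v(\omega) \geq 1$, and since $d\textbf{f}_0 \equiv I \pmod{\pi}$ we have $c_0 \geq 1$; substituting in gives $b_1 \geq \min(2,\, 1 + v(p),\, 1 + (p-1)c_0) \geq 2$ for $p \neq 2$ and $b_1 \geq 1 = G_2$ for $p = 2$. The chain rule applied to $d\textbf{f}^{\,p}_0$ yields $c_1 \geq 1$ in both cases.

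For the inductive step with $p \neq 2$, in the sub-case $b_k \leq v(p)$ the induction gives $b_k \geq 2^k$, $c_k \geq 2^{k-1}$, and $v(p) \geq b_k \geq 2^k$; hence all three of $b_k$, $(p-1)c_k \geq 2 \cdot 2^{k-1}$, and $v(p)$ are at least $2^k$, so $b_{k+1} \geq 2^{k+1}$. In the sub-case $b_k > v(p)$, the final assertion of Lemma \ref{lem5} forces $b_{k+1} > v(p)$, so the conditional bound on $b_{k+1}$ is vacuous. The analogous argument with $(p-1) = 1$ and the identity $G_{k+2} = G_{k+1} + G_k$ handles $p = 2$; this is precisely where the Fibonacci recurrence enters.

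The main obstacle I anticipate lies in the $c_{k+1}$ step, where the recurrence contains the term $c_k + v(p)$ that can a priori fall below the target bound when $v(p)$ is small. The resolution is a compatibility check between Lemma \ref{lem5} and the inductive hypothesis: if $b_k \leq v(p)$ the inductive bound $b_k \geq 2^k$ forces $v(p) \geq 2^{k-1}$, making $c_k + v(p) \geq 2^k$; if instead $b_k > v(p)$ then Lemma \ref{lem5} forces $(p-1) c_k \leq v(p)$, which combined with $c_k \geq 2^{k-1}$ forces $v(p) \geq (p-1)\cdot 2^{k-1} \geq 2^k$ for $p \geq 3$, again making $c_k + v(p) \geq 2^k$. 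The term $p c_k$ covers the remaining branch. For $p=2$ the same dichotomy with $G_k$ in place of $2^{k-1}$ succeeds, using $2 c_k \geq G_{k+1}$ and Fibonacci additivity to handle the $c_k + v(2)$ term.
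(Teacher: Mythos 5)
Your proposal follows the paper's proof essentially verbatim: induction on $k$ using the recurrences of Lemma \ref{lem5}, split into the cases $b_k \leq v(p)$ and $b_k > v(p)$, with the Fibonacci recurrence entering for $p=2$ exactly as in the paper. If anything you are more careful than the paper at the base case (the paper simply asserts $b_1 \geq 1$ and $c_1 \geq 1$, while you derive $b_1 \geq 2$ for $p \neq 2$ by expanding $\textbf{f}^{\,p}(0)$ directly) and in making explicit the compatibility check on the $c_k + v(p)$ branch that the paper leaves implicit.
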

    \begin{proof}
        Consider first $p \neq 2$.  Since $b_1 \geq 1$ and $c_1 \geq 1$ we have established $k=1$.  Assume $b_k \geq 2^k$ and $c_k \geq 2^{k-1}$ for some $k$.  If $b_k \leq v(p)$, then from Lemma \ref{lem5} we have $b_{k+1} \geq 2^{k+1}$ and $c_{k+1} \geq 2^k$.  If $b_k > v(p)$, then from Lemma \ref{lem5} we have $c_{k+1} \geq p c_k \geq 3 \cdot 2^{k-1}$, establishing the lemma for $p \neq 2$.

        If $p=2$, we have $b_1 \geq 1$ and $c_1 \geq 1$ establishing $k=1$.  So assume $b_k \geq G_{k+1}$ and $c_k \geq G_{k}$ for some $k$.  If $b_k \leq v(p)$, then by Lemma \ref{lem5} we have $b_{k+1} \geq G_{k+2}$ and $c_{k+1} \geq G_{k+1}$ since $G_{i+1} \leq 2\cdot G_{i}$ for each $i > 1$.  Similarly, if $b_k > v(p)$, then $c_{k+1} \geq 2c_k \geq G_{k+1}$.
    \end{proof}

    \begin{proof}[Proof of Theorem \ref{thm_intro_pe}]
        From Lemma \ref{lem5} we have $\min(c_{e-1},b_{e-1}) \leq v(p)$, and if $b_{e-1} > v(p)$, then $(p-1)c_{e-1} \leq v(p)$.  Using Lemma \ref{lem2}, for $p \neq 2$ the exponent $e$ must satisfy
        \begin{equation*}
            2^{e-1} \leq v(p)
        \end{equation*}
        and for $p=2$ the exponent $e$ must satisfy
        \begin{equation*}
            F_{e-1}  = \frac{\alpha^{e-1} - (-1/\alpha)^{e-1}}{\sqrt{5}} \leq v(2).
        \end{equation*}
        The formulas stated are now clear.
    \end{proof}

    \begin{proof}[Proof of Corollary \ref{cor_intro}]
        Apply the bounds from Theorem \ref{thm_intro_mrp} and Theorem \ref{thm_intro_pe} with $v(p)=1$.  To bound $m$ we note that there are at most $p$ choices for each coordinate of $\overline{P}$, so we have $m \leq p^d$.
    \end{proof}

    \section{Proof of Theorem \ref{thm_etale}}
        \begin{proof}
            Since $\phi$ is \'etale it sends smooth points to smooth points and permutes the irreducible components of $X$.  Therefore, there exists a finite integer $l$ such that $\phi^l:Y \to Y$.  Since $Y$ is smooth and irreducible and the restriction of $\phi^l$ to $Y$ has good reduction, we may apply Theorem \ref{thm_intro_mrp} and Theorem \ref{thm_intro_pe}.
        \end{proof}

\bibliography{masterlist}
\bibliographystyle{plain}

\end{document}